\documentclass[10pt, a4paper]{amsart}
\usepackage{amssymb,amsmath,latexsym,amscd,amsfonts,amsthm,mathrsfs}
\usepackage{enumerate, graphicx}
\usepackage[utf8]{inputenc}


\newcommand{\re}{\text{\rm Re\,}}
\newcommand{\im}{\text{\rm Im\,}}

\newcommand{\bd}{{\mathbb{D}}}
\newcommand{\bn}{{\mathbb{N}}}
\newcommand{\br}{{\mathbb{R}}}

\newcommand{\bc}{{\mathbb{C}}}

\newcommand{\bt}{{\mathbb{T}}}

\newcommand{\ca}{{\mathcal{A}}}
\newcommand{\css}{{\mathcal{S}}}


\renewcommand{\a}{\alpha}
\renewcommand{\b}{\beta}
\renewcommand{\l}{\lambda}
\newcommand{\s}{\sigma}

\renewcommand{\ss}{\Sigma}

\newcommand{\p}{\varphi}
\renewcommand{\t}{\tau}

\renewcommand{\o}{\omega}
\newcommand{\oo}{\Omega}
\newcommand{\g}{\gamma}
\newcommand{\gga}{\Gamma}
\newcommand{\ep}{\varepsilon}
\newcommand{\z}{\zeta}

\newcommand{\nt}{\noindent}

\newcommand{\bsl}{\backslash}

\newcommand{\pt}{\partial}

\newcommand{\lp}{\left(}
\newcommand{\rp}{\right)}

\DeclareMathOperator{\dist}{{\it d}}


\newcommand{\cn}{\mathcal{N}}

\newcommand{\Sc}{\mathcal{S}}

\allowdisplaybreaks
\numberwithin{equation}{section}

\newtheorem{theorem}{Theorem}[section]
\newtheorem*{theoremm}{Theorem A}
\newtheorem{lemma}[theorem]{Lemma}
\newtheorem{corollary}[theorem]{Corollary}
\newtheorem{proposition}[theorem]{Proposition}

\newtheorem{remark}[theorem]{Remark}
\newtheorem*{remarkk}{Remark}

\begin{document}

\title[Generalized Blaschke-type conditions]
{On zeros of analytic functions satisfying non-radial growth conditions}

\author{A. Borichev}
\address{Aix Marseille Universit\'e\\
CNRS\\ Centrale Marseille\\ I2M\\ 39 rue F.~Joliot-Curie, 13453 Marseille\\ France}
\email{alexander.borichev@math.cnrs.fr}

\author{L. Golinskii}
\address{Mathematics Division, ILTPE, 47 Science ave.,  61103 Kharkov, Ukraine}
\email{golinskii@ilt.kharkov.ua}

\author{S. Kupin}
\address{IMB, Universit\'e de Bordeaux, 351 ave. de la Lib\'eration, 33405 Talence Cedex, France}
\email{skupin@math.u-bordeaux1.fr}

\subjclass{Primary: 30C15; Secondary: 30C20}

\begin{abstract}
Extending the results of Borichev--Golinskii--Kupin [2009], we obtain refined Blaschke-type necessary conditions
on the zero distribution of analytic functions on the unit disk and on the complex plane with a cut along the positive semi-axis
satisfying some non-radial growth restrictions.
\end{abstract}

\maketitle

\begin{center}
{\it To Peter Yuditskii
on occasion of his 60-th anniversary}
\end{center}

\vspace{0.5cm}
\section*{Introduction and main results}\label{s0}

The study of relations between the zero distribution of an analytic function and its growth is likely to be one of the most basic
problems of complex analysis. We have no intention to review a vast literature on it, but just give se\-ve\-ral references related to the points of our
interest. Perhaps, the first results in this direction were obtained in the second half of 19-th century by Hadamard, Borel, Wejerstrass and others,
see Levin \cite[Ch. 2]{le} for a modern presentation. These results completely described the behavior of zeros of an entire function of finite type.
Later, Blaschke \cite{bla}, Nevanlinna \cite{ne} and Smirnov \cite{smi} described the zero sets of functions from the Hardy spaces
$H^p(\bd),\ p>0$, or, more generally, the Nevanlinna class $\cn(\bd)$. Here, as usual, $\bd=\{|z|<1\}$. Namely, for
$f\in\cn(\bd),\  f\not\equiv0$, one has
\begin{equation}\label{ee1}
\sum_{\zeta\in Z(f)} (1-|\zeta|)\le\sup_{0\le r<1} \frac1{2\pi}\,\int^{2\pi}_0\log^+|f(re^{i\theta})|\, d\theta - \log|f(0)|,
\end{equation}
where $Z(f)$ stands for the zero set of $f$ counting multiplicities. Hence, a discrete subset $Z(f)$ of the unit disk is a zero set of a function from
$H^p(\bd)$ (or $\cn(\bd)$) if and only if the series at the LHS of \eqref{ee1} converges. This condition is usually called the ``Blaschke condition''
after \cite{bla}.

Let $\ca(\bd)$ be the set of analytic functions on the unit disk. An argument similar to the proof of \eqref{ee1}, shows that
if $f\in \ca(\bd)$, $|f(0)|=1$,  satisfies the growth condition
$$
\log |f(z)|\le \frac K{(1-|z|)^p},
$$
where $p\ge 1$, then for any $\ep>0$
\begin{equation}\label{golub}
\sum_{\zeta\in Z(f)} (1-|\zeta|)^{p+1+\ep}\le C_0\cdot K,
\end{equation}
where the constant $C_0=C_0(p,\ep)$ depends on $p$ and $\ep$, see, e.g., Golubev \cite{go}.

Of course, the study of the zero distribution of analytic functions from other classes is much more involved; see, for instance, papers of
Korenblum \cite{ko1, ko2} on the zero distribution for functions from spaces $A^{-p}(\bd), A^{-\infty}(\bd)$. Interesting results
on zeros of functions from some Bergman-type spaces are given in Seip \cite{se}.

The above mentioned spaces of analytic functions are defined with the help of a radial (i.e., invariant with respect to rotations of
the unit disk) growth conditions. However, it turns out that one often needs to deal with classes of analytic functions subject to
{\it non-radial} growth relations. These classes appear, in particular, if one wants to study the distribution
of the discrete spectrum of non-self-adjoint perturbations for certain self-adjoint or unitary operators.

The study of such classes was initiated in \cite{bgk1}, and the main result therein looks as follows, see \cite[Theorem 0.2]{bgk1}. Given a finite
set $F=\{\xi_k\}_{k=1}^m$ on the unit circle $\bt=\{|z|=1\}$, let $\dist(z,F)=\min_k|z-\xi_k|$ denote the Euclidian distance between
a point $z\in\bd$ and $F$. In what follows $K$ is a positive constant.

\begin{theoremm}
Let $f\in \ca(\bd)$, $|f(0)|=1$, satisfy the growth condition
\begin{equation*}
\log|f(z)|\le\frac{K}{(1-|z|)^p\, \dist^q(z,F)}\,, \qquad z\in\bd, \quad p,q\ge0.
\end{equation*}
Then, for each $\ep>0$ there is a positive number $C_1=C_1(F,p,q,\ep)$  such that the following Blaschke-type condition holds:
\begin{equation}\label{btc0}
\sum_{\z\in Z(f)}
{(1-|\z|)^{p+1+\ep}}\, \dist^{(q-1+\ep)_+}(\z,F) \le C_1\cdot K, \qquad a_+:=\max(a,0).
\end{equation}
Moreover, in the case $p=0$ the term $(1-|\z|)^{p+1+\ep}$ can be replaced by $(1-|\z|)$.
\end{theoremm}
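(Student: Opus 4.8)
The plan is to reduce the assertion to a single weighted Jensen‐type identity and then to a purely computational bound on the Laplacian of a well‑chosen test function. \emph{Construction of the test function.} Fix a small $r_0>0$ so that the discs $\{|z-\xi_k|<2r_0\}$, $1\le k\le m$, are pairwise disjoint; let $W$ be a positive $C^2$ function on $\bd$ with $W(z)=|z-\xi_k|^{(q-1+\ep)_+}$ for $|z-\xi_k|<r_0$, $W(z)=1$ where $\dist(z,F)>2r_0$, and a $C^2$–interpolation in the annuli $\{r_0<|z-\xi_k|<2r_0\}$, so that $W(z)\asymp\dist^{(q-1+\ep)_+}(z,F)$ with the natural bounds on $\nabla W$ and $\nabla^2W$; put $\Phi(z):=(1-|z|^2)^{p+1+\ep}\,W(z)$. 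Then $\Phi$ is $C^2$ and positive on $\bd$ and $\Phi(z)\asymp(1-|z|)^{p+1+\ep}\dist^{(q-1+\ep)_+}(z,F)$, so it suffices to estimate $\sum_{\z\in Z(f)}\Phi(\z)$.

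\emph{The Jensen identity and the sign of $\dd\Phi$.} Put $u=\log|f|$, so that $\frac1{2\pi}\dd u=\sum_{\z\in Z(f)}\d_\z$ as measures. Green's formula in $\bd_\r$ applied to $u$ and $\Phi$, followed by the passage $\r\to 1$ (the boundary integrals vanish in the limit since $\Phi(z)\lesssim(1-|z|)^{1+\ep}$ and $|\nabla\Phi(z)|\lesssim(1-|z|)^{\ep}$ near $\bt$, while $\int_{|z|=\r}\bigl|\log|f(z)|\bigr|\,|dz|\le 2\int_{|z|=\r}\log^+|f(z)|\,|dz|\le CK(1-\r)^{-p-(q-1)_+}$ by Jensen's inequality), gives
\[
2\pi\sum_{\z\in Z(f)}\Phi(\z)=\int_{\bd}\log|f(z)|\,\dd\Phi(z)\,dA(z).
\]
The point of the above $\Phi$ is that $\dd\Phi>0$ on a one‑sided neighbourhood $\{1-|z|<\d_0\}$ of $\bt$: near $\xi_k$, writing $z=r\xi_ke^{i\al}$, an elementary computation in polar coordinates centred at $0$ shows that for $1-r$ small the positive terms coming from $\partial_r^2$ and from the subharmonicity of $|z-\xi_k|^{(q-1+\ep)_+}$ outweigh the cross term; elsewhere in $\{1-|z|<\d_0\}$ one has $\dist(z,F)\asymp1$ and the term $\dd\big[(1-|z|^2)^{p+1+\ep}\big]\asymp(1-|z|)^{p-1+\ep}>0$ dominates. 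Hence $(\dd\Phi)_-$ is supported in a compact set $E\Subset\bd$, and since $\int_E\bigl|\log|f|\bigr|\,dA\le CK$ (because $\log^+|f|\le\Omega:=K(1-|z|)^{-p}\dist^{-q}(z,F)\le CK$ on $E$, and $\int_E\log^-|f|\,dA\le CK$ by the Riesz decomposition of $\log|f|$ on a slightly larger disc, using $\#(Z(f)\cap E)\le CK$ and $\log^+|f|\le CK$ on compacta), the identity yields
\[
2\pi\sum_{\z\in Z(f)}\Phi(\z)\le\int_{\bd}\log^+|f(z)|\,(\dd\Phi)_+(z)\,dA(z)+CK\le\int_{\bd}\Omega(z)\,\bigl|\dd\Phi(z)\bigr|\,dA(z)+CK.
\]

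\emph{The core estimate --- the main obstacle.} It remains to prove
\[
\int_{\bd}\frac{\bigl|\dd\Phi(z)\bigr|}{(1-|z|)^{p}\,\dist^{q}(z,F)}\,dA(z)\le C(F,p,q,\ep).
\]
Away from $F$ one has $\bigl|\dd\Phi\bigr|\asymp(1-|z|)^{p-1+\ep}$ and $\dist(z,F)\asymp1$, so the contribution is $\lesssim\int_0^1(1-r)^{\ep-1}\,dr<\infty$; this is precisely where $\ep>0$ is used, and amounts to a proof of \eqref{golub}. Near $\xi_k$, with $z=r\xi_ke^{i\al}$ and $1-r\le|z-\xi_k|\asymp(1-r)+|\al|$, Leibniz' rule applied to $(1-|z|^2)^{p+1+\ep}|z-\xi_k|^{\b}$, $\b:=(q-1+\ep)_+$, gives terms of magnitude at most $(1-r)^{p-1+\ep}|z-\xi_k|^{\b}$, $(1-r)^{p+\ep}|z-\xi_k|^{\b-1}$, $(1-r)^{p+1+\ep}|z-\xi_k|^{\b-2}$; testing each against $\Omega$ and integrating $dr\,d\al$ over $\{1-r<|\al|\}$ and over $\{|\al|<1-r\}$ reduces the convergence, in each case, to $\int_0^{r_0}t^{\,\b+\ep-q}\,dt<\infty$, which holds exactly because $\b=(q-1+\ep)_+>q-1-\ep$; this is what forces the truncation $(\cdot)_+$ in the exponent of \eqref{btc0}. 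A partition of unity gluing the radial weight to the $m$ local weights completes the proof. The delicate part is thus the bookkeeping of exponents near $F$ --- identifying the correct power of $\dist(z,F)$ --- together with the verification that the wrong‑sign part of $\dd\Phi$ lives away from $\bt$.

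\emph{The case $p=0$.} Run the same argument with $1-|z|^2$ in place of $(1-|z|^2)^{1+\ep}$ as the radial factor of $\Phi$. Now $\dd\big[(1-|z|^2)\big]$ is bounded, so away from $F$ the core estimate only requires integrability of a bounded function (and no extra power $(1-|z|)^{\ep}$ there), while near each $\xi_k$ the same computation as above, with $1-r$ replacing $(1-r)^{1+\ep}$, again reduces to $\int_0^{r_0}t^{\,\b+\ep-q}\,dt<\infty$. Hence $(1-|\z|)$ may replace $(1-|\z|)^{p+1+\ep}$ in \eqref{btc0}.
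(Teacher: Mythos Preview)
Theorem~A is quoted from \cite{bgk1}; the method (sketched here in the proof of Theorem~\ref{th04}) first settles $p=0$ by a conformal-map argument and then lifts to $p>0$ by applying the $p=0$ bound to the dilates $f_n(z)=f\bigl((1-2^{-n})z\bigr)$ and summing over annuli. Your test-function route is genuinely different, and the core estimate $\int_\bd\Omega\,|\Delta\Phi|\,dA<\infty$ is correct and explains transparently why the exponent $(q-1+\ep)_+$ is forced.

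There are, however, two real gaps. First, the passage $\rho\to1$ in Green's formula is not justified by your bounds: even with the sharp estimate $\sup_{|z|=\rho}|\nabla\Phi|\lesssim(1-\rho)^{p+\ep}$ (your stated $(1-|z|)^\ep$ is weaker), combining it with $\int_{|z|=\rho}\bigl|\log|f|\bigr|\,|dz|\lesssim K(1-\rho)^{-p-(q-1)_+}$ gives a boundary term of order $K(1-\rho)^{\ep-(q-1)_+}$, which diverges whenever $q>1+\ep$ --- i.e.\ exactly when the weight $\dist^{(q-1+\ep)_+}$ is nontrivial. The obstruction is the absence of any pointwise control on $\log^-|f|$, so the extra decay of $|\nabla\Phi|$ near $F$ cannot be played against it; a repair (a smooth compactly supported truncation of $\Phi$, or the Green-potential representation $\Phi(z)=\frac1{2\pi}\int_\bd\log\frac{|1-\bar wz|}{|z-w|}\,(-\Delta\Phi)(w)\,dA(w)$ combined with Poisson--Jensen on subdisks) is possible but is not the one-line step you indicate. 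Second, your $p=0$ sharpening is wrong: with radial factor $1-|z|^2$ one has $\Delta(1-|z|^2)=-4$, so on $\{\dist(z,F)>2r_0\}$ (where $W\equiv1$) one gets $\Delta\Phi=-4<0$ all the way out to $\bt$. Thus $(\Delta\Phi)_-$ is \emph{not} compactly supported, the sign argument --- your only device for disposing of $\log^-|f|$ near the boundary --- collapses, and ``running the same argument'' is impossible. This is precisely why in \cite{bgk1} the sharp $p=0$ case needs a separate conformal-mapping idea rather than a test-function scheme.
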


Theorem A effectively applies to the study of the discrete spectrum of complex perturbations of certain self-adjoint operators of mathematical physics in Demuth--Hansmann--Katriel \cite{DeHaKa, DeHaKa01}, Golinskii--Kupin \cite{goku1,goku2,goku3}, Dubuisson \cite{Du, Du2}, and Sambou \cite{Sa}.
We also mention recent interesting papers by Cuenin--Laptev--Tretter \cite{cuela}, Frank--Sabin \cite{frasa}, Frank \cite{fra15}, and Laptev--Safronov \cite{LaSa}
in this connection. For some extensions of this result to the case of arbitrary closed sets $F$ and subharmonic on $\bd$ functions $f$, and applications
in perturbation theory see Favorov--Golinskii \cite{fg1, fg2}.

\medskip

Let us go over to the  main results of the present paper which extend Theorem~A. Let $E=\{\z_j\}_{j=1}^n$ and
$F=\{\xi_k\}_{k=1}^m$ be two disjoint finite sets of distinct points on the unit circle $\bt$.

\begin{theorem}\label{th01}
Let $f\in \ca(\bd)$, $|f(0)|=1$, satisfy the growth condition
\begin{equation}\label{grow0}
\log|f(z)|\le\frac{K}{(1-|z|)^p}\,\frac{\dist^r(z,E)}{\dist^q(z,F)}\,, \qquad z\in\bd, \quad p,q,r\ge0.
\end{equation}
Then for every $\ep>0$, there is a positive number $C_2=C_2(E,F,p,q,r,\ep)$ such that
the following Blaschke-type condition holds:
\begin{equation}\label{btc1}
\sum_{\z\in Z(f)}
{(1-|\z|)^{p+1+\ep}}\,\frac{\dist^{(q-1+\ep)_+}(\z,F)}{\dist^{\min(p,r)}(\z,E)}\le C_2\cdot K.
\end{equation}
\end{theorem}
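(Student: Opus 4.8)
The strategy is to reduce Theorem~\ref{th01} to Theorem~A by an appropriate ``absorption'' of the zero-promoting factor $\dist^r(z,E)$ into the denominator, at the cost of worsening the singularity near $E$. The point is that $\dist(z,E)$ is bounded above on $\bd$ (say by $2$), so for any $p,r\ge 0$ one has the elementary pointwise inequality
\begin{equation}\label{plan:absorb}
\frac{\dist^r(z,E)}{(1-|z|)^p}\le \frac{C}{(1-|z|)^p\,\dist^{(p-r)_+}(z,E)}\cdot\dist^{\min(p,r)}(z,E)\cdot\frac{1}{\dist^{\min(p,r)}(z,E)},
\end{equation}
but more usefully the crude bound $\dist^r(z,E)\le C\,(1-|z|)^{-0}$ is too lossy. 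The right way is to keep things homogeneous: I would choose a parameter and write $\dist^r(z,E)\le C\,(1-|z|)^{-(p-r)_+}\dist^{\min(p,r)}(z,E)$ when $z$ is near a point of $E$, using that near $\xi\in E$ one has $1-|z|\le|z-\xi|=\dist(z,E)$, hence $(1-|z|)^{s}\le \dist^{s}(z,E)$ for $s\ge 0$; away from $E$ the factor $\dist^r(z,E)$ is just a bounded constant. In either case one gets, for all $z\in\bd$,
\begin{equation}\label{plan:key}
\log|f(z)|\le \frac{K}{(1-|z|)^p}\,\frac{\dist^r(z,E)}{\dist^q(z,F)}\le \frac{C\,K}{(1-|z|)^{\widetilde p}}\,\frac{1}{\dist^q(z,F)},\qquad \widetilde p:=p,
\end{equation}
provided one is willing to \emph{lower} the power of $\dist(z,E)$; but since $E$ does not appear in the denominator of the hypothesis at all, the honest move is the opposite: we must introduce $\dist(z,E)$ in a denominator so that, after applying Theorem~A with $F$ replaced by $F\cup E$ and a suitably chosen exponent, the resulting Blaschke sum \eqref{btc0} carries exactly the factor $\dist^{-\min(p,r)}(z,E)$ demanded by \eqref{btc1}.

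Concretely, here is the sequence of steps I would carry out. \emph{Step 1.} Fix a small $\d>0$ and split $\bd=\Oc_E\cup\Oc_E^c$, where $\Oc_E=\{z:\dist(z,E)<\d\}$ is a union of small disjoint caps around the points of $E$, chosen small enough that $F\cap\overline{\Oc_E}=\varnothing$. On $\Oc_E^c$ the factor $\dist^r(z,E)$ is comparable to a constant, so the hypothesis already gives the Theorem~A bound with set $F$ and exponents $(p,q)$, and Theorem~A yields the convergence of $\sum_{\z\in Z(f)\cap\Oc_E^c}(1-|\z|)^{p+1+\ep}\dist^{(q-1+\ep)_+}(\z,F)$; since $\dist(\z,E)$ is bounded below by $\d$ there, this also bounds the corresponding piece of \eqref{btc1}. \emph{Step 2.} On each cap $\{z:|z-\xi|<\d\}$, $\xi\in E$, we have $1-|z|\le|z-\xi|$, hence for the chosen exponent $\min(p,r)$, writing $s:=\min(p,r)$,
\[
\frac{\dist^r(z,E)}{(1-|z|)^p}=\frac{\dist^{r-s}(z,E)}{(1-|z|)^{p-s}}\cdot\frac{\dist^{s}(z,E)}{(1-|z|)^{s}}\le \frac{C}{(1-|z|)^{p-s}}\cdot\frac{1}{(1-|z|)^{s}}\cdot \dist^{s}(z,E),
\]
where I used $\dist^{r-s}(z,E)\le C$ (since $r-s\ge0$ and distances are bounded) \emph{and} $(1-|z|)^{-s}\ge \dist^{-s}(z,E)$ is the wrong direction — so instead I bound $\dist^{r-s}(z,E)\cdot(1-|z|)^{-(p-s)}\le C(1-|z|)^{-(p-s)}$ only when $r\ge p$ (so $s=p$, $p-s=0$, clean), and when $r<p$ ($s=r$) I use $\dist^{r}(z,E)\le(1-|z|)^{r-p}\cdot(1-|z|)^{p-r}\dist^{r}(z,E)$... the cleanest formulation is: in all cases $\dist^r(z,E)\le C\,(1-|z|)^{-(p-r)_+}\,\dist^{\min(p,r)}(z,E)$ fails, so I will instead prove directly $\dist^{r}(z,E)/(1-|z|)^{p}\le C\,\dist^{\min(p,r)-p}(z,E)=C\,\dist^{-(p-r)_+}(z,E)$ on the cap, using $(1-|z|)\ge$ nothing — rather using $(1-|z|)^{-1}\le$ is false. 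I will therefore pass through an intermediate Theorem-A application on $\Oc_E$ with $F$ \emph{replaced by} $E$ and exponents $(p',q')$ chosen so that $p'+1+\ep=p+1+\ep$ and $(q'-1+\ep)_+=-\min(p,r)$ is impossible since the left side is $\ge0$; hence the factor $\dist^{-\min(p,r)}(\z,E)$ in \eqref{btc1} must come not from enlarging $F$ but from a separate, sharper estimate near $E$.

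\emph{Step 3 (the heart of the matter).} Near a point $\xi\in E$ the growth bound \eqref{grow0} is \emph{better} than a pure pole because of the vanishing factor $\dist^r(z,E)$, and the gain should be converted into an \emph{improved} Blaschke weight. Following the scheme of \cite{bgk1}, I would apply a conformal map sending the cap to a standard domain, reduce to a half-plane (or strip) model, and run the Jensen/Carleman-type argument: integrate $\log|f|$ against a harmonic measure or a Green-type kernel adapted to the singularity, so that the exponent $p$ governing the radial blow-up and the exponent $r$ governing the vanishing at $\xi$ combine to give, after integration, a boundary term controlled by $K$ and a zero-counting term with weight $(1-|\z|)^{p+1+\ep}\dist^{-\min(p,r)}(\z,E)$; the appearance of $\min(p,r)$ is exactly the break-even between ``$r$ large enough that the zero weight saturates at the pure-pole value'' ($r\ge p$, no $E$-denominator needed, weight $\dist^{0}$) and ``$r$ small'' ($r<p$, the $E$-denominator is $\dist^{-r}$). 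I expect the main obstacle to be precisely this local analysis: making the two-parameter Jensen estimate near $\xi$ with the correct power $\min(p,r)$, and then patching it with the Theorem-A estimate of Step~1 on $\Oc_E^c$ — matching the powers of $(1-|\z|)$ and of $\dist(\z,F)$ across the overlap, and controlling the finitely many ``interaction'' zeros lying near both $E$ and $F$, which cannot happen since $E\cap F=\varnothing$ and the caps are chosen disjoint from $F$. Summing the two pieces gives \eqref{btc1} with $C_2$ depending on $E,F,p,q,r,\ep$ (and on the fixed $\d$, hence ultimately only on the listed data).
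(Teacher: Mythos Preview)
Your high-level decomposition is right and matches the paper: split the zeros into those far from $E$ (where $\dist^r(\cdot,E)\asymp 1$ and Theorem~A applied to $F$ alone already gives the required sum) and those in small caps around the points of $E$ (where $F$ plays no role and the problem is purely local). The paper does exactly this localization, via Proposition~\ref{local} and the curvilinear quadrangles $L_a$.

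However, your proposal does not contain a proof of the local estimate near a point $\xi\in E$, and your own text shows why the elementary route collapses: every attempt to absorb $\dist^r(z,E)$ into the denominator and then invoke Theorem~A runs into the obstruction that $(q'-1+\ep)_+\ge 0$, so Theorem~A can only place nonnegative powers of $\dist(\z,E)$ in the \emph{numerator}, never the factor $\dist^{-\min(p,r)}(\z,E)$ required by \eqref{btc1}. The gain from the vanishing factor $|z-\xi|^r$ is genuinely new information that Theorem~A cannot see. Your Step~3 acknowledges this and gestures at ``a conformal map \dots\ Jensen/Carleman-type argument'', but that is precisely the entire content of the theorem; without it nothing is proved.

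What the paper actually does near $\xi$ (take $\xi=1$) is the following, and this is the missing idea in your write-up. On the Stolz angle $\css_A=\{|z-1|<A(1-|z|)\}$ the hypothesis gives $\log|f|\le K A^{\min(p,r)}(1-|z|)^{-(p-r)_+}$; in particular for $p\le r$ the function is \emph{bounded} by $KA^p$ there. One transplants to $\bd$ by the conformal map $\p_A:\css_A\to\bd$, applies the ordinary Blaschke/Jensen bound, and then uses the Koebe-type distortion $1-|\p_A(z)|\asymp |z-1|^{\a-1}\dist(z,\partial\css_A)$ with $\a=\pi/(2\arccos A^{-1})$ to pull the zero sum back. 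Running this over the dyadic family $A=2^k$ and summing, the factor $A^{\min(p,r)}$ on the right becomes the weight $|1-\z|^{-\min(p,r)}$ on the left, since on the annulus $\css_{2^{k}}\setminus\css_{2^{k-1}}$ one has $(1-|\z|)/|1-\z|\asymp 2^{-k}$. This dyadic Stolz-angle mechanism is what produces the \emph{negative} power of $\dist(\z,E)$; it cannot be replaced by any pointwise inequality of the type you were attempting in Step~2.
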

Of course, Theorem A is exactly Theorem \ref{th01} with $r=0$.

An obvious inequality for an arbitrary finite set $B=\{\b_j\}_{j=1}^n\subset\bt$
$$ c(B)\,\prod_{j=1}^n |z-\b_j|\le\dist(z,B)\le C(B)\,\prod_{j=1}^n |z-\b_j| $$
along with Theorem 0.3 from \cite{bgk1} prompt a more general statement.

\begin{theorem}\label{th02}
Let $f\in \ca(\bd)$, $|f(0)|=1$, satisfy the growth condition
\begin{equation}\label{grow}
\log|f(z)|\le\frac{K}{(1-|z|)^p}\,\frac{\prod_{j=1}^n
|z-\z_j|^{r_j}}{\prod_{k=1}^m |z-\xi_k|^{q_k}}\,, \qquad z\in\bd, \quad p,q_k,r_j\ge0.
\end{equation}
Then for every $\ep>0$, there is a positive number $C_3=C_3(E,F,p,\{q_k\},\{r_j\},\ep)$ such that
the following Blaschke-type condition holds:
\begin{equation}\label{btc2}
\sum_{\z\in Z(f)} (1-|\z|)^{p+1+\ep}\,\frac{\prod_{k=1}^m
|\z-\xi_k|^{(q_k-1+\ep)_+}}{\prod_{j=1}^n |\z-\z_j|^{\min(p,r_j)}}\le C_3\cdot K.
\end{equation}
\end{theorem}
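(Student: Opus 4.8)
The plan is to use the subharmonic‑function / test‑function method. Put $u=\log|f|$: it is subharmonic on $\D$ with $u(0)=0$, its Riesz measure is the zero‑counting measure $\mu=\sum_{\z\in Z(f)}\delta_\z$ (multiplicities included), and $u\le\pp$ on $\D$, which is hypothesis \eqref{grow}, with $\pp(z)=K(1-|z|)^{-p}\prod_j|z-\z_j|^{r_j}\prod_k|z-\xi_k|^{-q_k}$. First I would make the reduction $r_j\le p$ for all $j$: since $|z-\z_j|\le2$ on $\overline{\D}$, replacing each $r_j$ by $\min(r_j,p)$ only enlarges $\pp$ by the bounded factor $2^{\sum_j(r_j-p)_+}$ (absorbed into $K$), while $\min(p,r_j)$ is unchanged; so we may assume $b_j:=\min(p,r_j)=r_j$. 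The goal is then to produce a weight $g\ge0$ on $\overline{\D}$, continuous, with $g|_{\T}=0$, comparable on $\D$ to the summand $W(z):=(1-|z|)^{p+1+\ep}\prod_k|z-\xi_k|^{(q_k-1+\ep)_+}\prod_j|z-\z_j|^{-r_j}$ of \eqref{btc2}, and such that Green's second identity on $r\D$, letting $r\to1$, gives $2\pi\int_{\D}g\,d\mu=\int_{\D}u\,\Delta g\,dA$ (the boundary terms vanishing) with $\int_{\D}u\,\Delta g\,dA\le C\,K$; this yields \eqref{btc2}, and Theorem \ref{th01} then follows by taking $q_k\equiv q$, $r_j\equiv r$ and the comparison $c(B)\prod|z-\b_j|\asymp\dist(z,B)$ for $B=E$ and $B=F$ in both the hypothesis and the conclusion.

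The obvious candidate is $g(z)=(1-|z|^2)^{p+1+\ep}\prod_k|z-\xi_k|^{(q_k-1+\ep)_+}\prod_j|z-\z_j|^{-r_j}$. One checks $g\asymp W$ throughout $\D$, and $g$ extends continuously to $\overline{\D}$ with $g|_{\T}=0$ (the factor $(1-|z|^2)^{p+1+\ep}$ beats the negative powers, since $p+1+\ep-r_j\ge1+\ep>0$). Away from $E$ the computation of $\Delta g$ goes exactly as in \cite{bgk1}: near $\T$ one has $\Delta g\asymp(1-|z|)^{p-1+\ep}\prod_k|z-\xi_k|^{(q_k-1+\ep)_+}>0$, the positive powers of $|z-\xi_k|$ (and the smoothness of $u$ there) causing no harm, $\int\pp\,(\Delta g)_+\,dA\le C\,K$ on that region (the critical exponent being $(1-|z|)^{\ep-1}$, integrable), while $(\Delta g)_-$ is confined to a compact part of $\D$ and is controlled there via $u\le\pp$ and the super‑mean‑value inequality for $\sup\pp-u\ge0$ — this is where $u(0)=0$ enters.

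The main obstacle is the behaviour of this $g$ near a point $\z_j\in E$. A direct computation, e.g. along the radius toward $\z_j$ where $|z-\z_j|=1-|z|$, gives
\[
\Delta g(z)\ \asymp\ (1-|z|)^{\,p-1+\ep-r_j}\,\big[(p+1+\ep-r_j)^2-(p+1+\ep)\big],
\]
and the bracket is \emph{negative} as soon as $p+1+\ep-r_j<\sqrt{p+1+\ep}$, which happens e.g. for $p\ge1$ and $r_j$ close to $p$. Thus $\Delta g<0$ on a Stolz‑type cone at $\z_j$ reaching $\T$, so $(\Delta g)_-$ is no longer compactly supported and the term $\int(-u)\,(\Delta g)_-$ is uncontrolled — there is no lower bound for $u$ near $\z_j$. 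To get around this I would treat the zeros in a fixed neighbourhood of each $\z_j$ by a separate localized argument. There $\pp\asymp K|z-\z_j|^{r_j}(1-|z|)^{-p}$, so on Stolz cones at $\z_j$ one has $\log|f|\lesssim K(1-|z|)^{r_j-p}$; one route is to pass to a half‑plane by a conformal map sending $\z_j\to\infty$ and run the scheme there with a test function adapted to the geometry at $\infty$, another is to replace $|z-\z_j|^{-r_j}$ in the weight by a version ``capped'' at the Stolz scale so as to restore the Laplacian sign in the critical cone. In either route the delicate point is the same — controlling the negative part of $\Delta g$ near $\z_j$ without a lower bound on $\log|f|$ — and this is where I expect the real work to lie; the normalization $|f(0)|=1$, transported into the problematic region along $\D$, together with the upper bound $u\le\pp$ (which is \emph{small} near $\z_j$, precisely because of the numerator) is what should make it go through.
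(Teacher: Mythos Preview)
Your diagnosis is accurate: the test-function method of \cite{bgk1} breaks down exactly at the points of $E$, because the factor $|z-\z_j|^{-r_j}$ forces $\Delta g<0$ in a full Stolz region at $\z_j$, and without a lower bound on $\log|f|$ there the term $\int(-u)(\Delta g)_-$ is genuinely uncontrolled. But your proposal stops precisely at this obstacle. The two suggested fixes --- a conformal transplant to the half-plane, or a ``capped'' weight --- are both left unexecuted, and neither is routine: capping at the Stolz scale does not, by itself, repair the sign of the Laplacian in the cone (the offending term comes from the radial second derivative of $(1-|z|^2)^{p+1+\ep-r_j}$, which persists after capping), and the half-plane route simply moves the difficulty to infinity. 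As written, the proof has a hole exactly where the new content of the theorem lies.

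The paper takes a different tack that sidesteps the Laplacian-sign problem entirely. It first isolates the one-point case $E=\{\z_0\}$, $F=\emptyset$ (Theorem~\ref{th03}, $q=0$). There it maps each Stolz angle $\css_A(\z_0)$ conformally onto $\bd$; on $\css_A$ the growth bound gives $\log|f|\le 2^{r-p}A^p K$ (or $\le A^r K/(1-|w|)^{p-r}$ when $p>r$), and the ordinary Poisson--Jensen formula on the image disk yields a zero sum over $Z(f)\cap\css_A$. The conformal distortion (Lemma~\ref{dercm}) converts this to $\sum_{\z\in Z(f)\cap\css_A}|1-\z|^{\a_A-1}\dist(\z,\pt\css_A)\le CA^pK$. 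Running $A=2^k$ and summing the dyadic layers $\css_{2^k}\setminus\css_{2^{k-1}}$ with weights $2^{-k(p+\ep)}$ produces the bound near $\z_0$. This avoids any global test function and any need for a lower bound on $\log|f|$; the only input on each Stolz angle is Jensen's inequality. The passage from the one-point result to Theorem~\ref{th02} is then genuinely local (Proposition~\ref{local}): near each $\z_i$ apply the one-point estimate in a quadrangle $L_a(\z_i)$ where the other factors are harmless, and away from $E$ invoke Theorem~A directly. If you want to salvage the potential-theoretic route, the missing idea is not a cleverer $g$ but this localization: handle a neighbourhood of each $\z_j$ by an argument that uses only the upper bound on $\log|f|$ restricted to that neighbourhood (e.g.\ the Stolz-angle Jensen scheme above), and reserve the test-function computation for the complement.
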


Once again, in the case $p=0$ the factor $(1-|\z|)^{1+\ep}$ in \eqref{btc1} and \eqref{btc2} can be replaced by
$(1-|\z|)$.

\begin{remarkk}
An observation due to Hansmann--Katriel \cite{HaKa} applies in our setting. It turns out that the stronger assumption
\begin{equation*}
\log|f(z)|\le\frac{K|z|^\g}{(1-|z|)^p}\,\frac{\prod_{j=1}^n |z-\z_j|^{r_j}}{\prod_{k=1}^m |z-\xi_k|^{q_k}}\,,
\qquad z\in\bd, \quad p,q_k,r_j,\g\ge 0,
\end{equation*}
implies the stronger conclusion
\begin{equation*}
\sum_{\z\in Z(f)}
\frac{(1-|\z|)^{p+1+\ep}}{|\z|^{(\g-\ep)_+}}\,\frac{\prod_{k=1}^m
|\z-\xi_k|^{(q_k-1+\ep)_+}}{\prod_{j=1}^n |\z-\z_j|^{\min(p,r_j)}}\le C (E,F,p,\{q_k\},\{r_j\},\ep)\cdot K.
\end{equation*}
\end{remarkk}

The result of Theorem \ref{th01} can be extended in another direction involving arbitrary closed subsets $F$ of the unit circle. A key
ingredient in such extensions is the following quantitative characteristic of $F$ known as the {\it Ahern--Clark type} \cite{ahcla}:
\begin{equation*}
\a(F):=\sup\{\a\in\br: |\{t\in\bt:\, \dist(t,F)<x\}|=O(x^\a), \quad x\to +0\}.
\end{equation*}
Here $|A|$ denotes the Lebesgue measure of a measurable set $A\subset\bt$.

\begin{theorem}\label{th04}
Let $E=\{\z_j\}_{j=1}^n$ be a finite subset of $\bt$, $F\subset\bt$ be an arbitrary closed set, and $E\cap F=\emptyset$.
Let $f\in \ca(\bd)$, $|f(0)|=1$, satisfy the growth condition $\eqref{grow0}$.
Then for every $\ep>0$, there is a positive number $C_4=C_4(E,F,p,q,r,\ep)$ such that the following Blaschke-type condition holds:
\begin{equation}\label{btc12}
\sum_{\z\in Z(f)}
{(1-|\z|)^{p+1+\ep}}\,\frac{\dist^{(q-\a(F)+\ep)_+}(\z,F)}{\dist^{\min(p,r)}(\z,E)}\le C_4\cdot K.
\end{equation}
\end{theorem}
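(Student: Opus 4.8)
The plan is to deduce Theorem~\ref{th04} from Theorem~\ref{th01} by a Jensen-type averaging argument: the case of a finite set $F$ is already available, so the task is to pass from finite sets to an arbitrary closed $F\subset\bt$ by approximating $F$ from outside by suitable finite nets and controlling the accumulated contributions. First I would fix a small $\ep>0$ and, for each integer $\nu\ge 1$, choose a finite set $F_\nu\subset\bt$ forming an $x_\nu$-net of $F$ with $x_\nu\asymp 2^{-\nu}$, chosen so that its cardinality is governed by the Ahern--Clark type: by definition of $\a(F)$, for any $\ep_1>0$ one has $|\{t:\dist(t,F)<x\}|=O(x^{\a(F)-\ep_1})$, hence $F$ can be covered by $O(x_\nu^{\a(F)-1-\ep_1})$ arcs of length $x_\nu$, and one takes $F_\nu$ to be (say) the centers of those arcs. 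The point of the construction is the two-sided control $\dist(z,F)\le \dist(z,F_\nu)+x_\nu$ and, on the ``dyadic shell'' $\{z:\ 2^{-\nu-1}\le\dist(z,F)< 2^{-\nu}\}$, also $\dist(z,F_\nu)\le 3\,2^{-\nu}\lesssim \dist(z,F)$, so that the two distances are comparable up to absolute constants on that shell.

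Next I would apply Theorem~\ref{th01} to $f$ with the finite set $F_\nu$ in place of $F$. The growth hypothesis \eqref{grow0} for $F$ implies the same hypothesis for $F_\nu$ with $q$ unchanged, since $\dist(z,F_\nu)\ge\dist(z,F)$ pointwise; more carefully, on each shell one uses $\dist(z,F)\ge c\,2^{-\nu}$ to absorb powers, so that $\log|f(z)|\le K_\nu (1-|z|)^{-p}\dist^r(z,E)\dist^{-q}(z,F_\nu)$ holds globally with an admissible constant $K_\nu$; the honest way is instead to keep the constant $K$ but split the disk into the shells $S_\nu$ and estimate, on $S_\nu$, the restriction of the left-hand side of \eqref{btc12} against \eqref{btc1} written for $F_\nu$. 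On $S_\nu$ we have $\dist(\z,F)\asymp 2^{-\nu}\asymp\dist(\z,F_\nu)$, hence $\dist^{(q-\a(F)+\ep)_+}(\z,F)\asymp 2^{-\nu(q-\a(F)+\ep)_+}$ whereas Theorem~\ref{th01} furnishes a bound with weight $\dist^{(q-1+\ep)_+}(\z,F_\nu)\asymp 2^{-\nu(q-1+\ep)_+}$; since $\a(F)\le 1$ always, we have $(q-\a(F)+\ep)_+\ge(q-1+\ep)_+$, so there is a loss of $2^{\nu[(q-\a(F)+\ep)_+-(q-1+\ep)_+]}\le 2^{\nu(1-\a(F))}$, which must be compensated.

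The compensation comes from summing the constants $C_2(E,F_\nu,\dots)$ over $\nu$ with weights reflecting $\card F_\nu\lesssim 2^{\nu(1-\a(F)+\ep_1)}$. Tracking the $F$-dependence of $C_2$ in Theorem~\ref{th01}, one sees (this is where the proof of Theorem~\ref{th01} must be revisited, or the explicit dependence from \cite{bgk1} invoked) that $C_2(E,F_\nu,p,q,r,\ep)$ grows at most polynomially in $\card F_\nu$ and in $1/\dist(E,F_\nu)\lesssim 1/\dist(E,F)$; combined with a geometric factor from the choice of $\ep$ versus $\ep_1$, the series $\sum_\nu C_2(E,F_\nu,\dots)\cdot 2^{-\nu\eta}$ converges for a suitable positive exponent $\eta$ depending on the slack in $\ep$. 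Summing the shell estimates then yields \eqref{btc12} with $C_4=\sum_\nu(\text{shell contribution})<\infty$. The main obstacle is precisely this uniform control of the constant in Theorem~\ref{th01} as the finite set $F_\nu$ is refined: one needs that replacing $F$ by a finite $x$-net costs only a power of $\card F\asymp x^{\a(F)-1}$ — and the loss of $(1-\a(F))$ in the exponent of $\dist(\cdot,F)$ in \eqref{btc12} (versus the $1$ in \eqref{btc1}) is exactly what makes the geometric series converge. A secondary technicality is handling the shell $\dist(z,F)\gtrsim 1$ separately (finitely many $\nu$, harmless) and checking that the $\z$'s near $E$ are unaffected since $E\cap F=\emptyset$ forces $\dist(\z,F)\gtrsim\dist(E,F)$ there, so those terms are already controlled by Theorem~\ref{th01} directly.
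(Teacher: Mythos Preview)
Your approach is quite different from the paper's and, as written, has a genuine gap.

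The paper does not approximate $F$ by finite sets at all. Instead it splits the zero set into a part close to $E$ (inside small quadrangles $L_{a/2}(\z_i)$, where $\dist(\cdot,F)$ is bounded below and the growth condition reduces to the $q=0$ case handled by Proposition~\ref{local}) and a part away from $E$. For the latter it uses the global bound $\log|f(z)|\le K/((1-|z|)^p\dist^q(z,F))$ and invokes, as a black box, \cite[Theorem~3]{fg2}, which already gives the Blaschke-type condition with the exponent $(q-\a(F)+\ep)_+$ for an \emph{arbitrary} closed $F$ when $p=0$; the passage to $p>0$ is then done by the dyadic scaling $f_n(z)=f(\l_n z)$, $\l_n=1-2^{-n}$, as in the proof of Theorem~A. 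So the Ahern--Clark type enters once, through the cited external result, not through a net approximation.

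In your argument there is a direction error in the very first step. If $F_\nu\subset F$ (or more generally $\dist(z,F_\nu)\ge\dist(z,F)$), then the right-hand side of \eqref{grow0} with $F_\nu$ in the denominator is \emph{smaller} than with $F$, so the growth bound for $F$ does \emph{not} imply the one for $F_\nu$; you cannot feed $f$ into Theorem~\ref{th01} with the finite set $F_\nu$ and the same constant $K$. Your own ``more carefully'' and ``honest way'' asides acknowledge this, but the fix you propose (restricting to a shell $S_\nu$ and then applying \eqref{btc1}) is not available either: Theorem~\ref{th01} is a global statement requiring the growth bound on all of $\bd$, not on a shell. Even if one could make sense of this, the second obstacle you name is real and unaddressed: on the shell $S_\nu$ the ratio of the desired weight $\dist^{(q-\a(F)+\ep)_+}$ to the available weight $\dist^{(q-1+\ep)_+}$ is of order $2^{-\nu(1-\a(F))}$, while $\card F_\nu\asymp 2^{\nu(1-\a(F)+\ep_1)}$; so even a linear dependence of $C_2$ on $\card F_\nu$ (which is the natural expectation from the localization in the proof of Theorem~\ref{th01}) makes the series $\sum_\nu 2^{-\nu(1-\a(F))}C_2(F_\nu)$ diverge. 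You would need $C_2$ to be essentially independent of the geometry of $F_\nu$, which is precisely the content of the result you are trying to prove.
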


Clearly, Theorem \ref{th01} is a special case of the latter result, since $\a(F)=1$ for finite sets $F$.

\smallskip

As we will see later in Section \ref{s15}, inequalities \eqref{btc1}, \eqref{btc2} are in some sense ``local'' with respect to the singular points
$\{\zeta_j\}^n_{j=1}$ and $\{\xi_k\}^m_{k=1}$ on the unit circle, so we can restrict ourselves to the case $n=m=1$ and
$E=\{\z_0\}$, $F=\{\xi_0\}$.  The following ``one-point'' version of the main result will be crucial in the sequel.

\begin{theorem}\label{th03}
Let $\z_0,\xi_0\in\bt$, $\z_0\not=\xi_0$, and let $f\in \ca(\bd)$, $|f(0)|=1$, satisfy the growth condition
\begin{equation}\label{grow1}
\log|f(z)|\le \frac{K}{(1-|z|)^p}\,\frac{|z-\z_0|^r}{|z-\xi_0|^q}\,,  \qquad z\in\bd, \quad p,q,r\ge0.
\end{equation}
Then for every $\ep>0$, there is a positive number $C_5=C_5(\z_0,\xi_0,p,q,r,\ep)$ such that the following inequality holds:
\begin{equation}\label{btc11}
\sum_{\z\in Z(f)}
(1-|\z|)^{p+1+\ep}\,\frac{|\z-\xi_0|^{(q-1+\ep)_+}}{|\z-\z_0|^{\min(p,r)}}\le C_5\cdot K.
\end{equation}
\end{theorem}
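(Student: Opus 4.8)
\textbf{Proof plan for Theorem \ref{th03}.}

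The plan is to reduce the one-point estimate \eqref{btc11} to the radial Blaschke-type inequality \eqref{golub} by means of a conformal change of variable that straightens out the singular behaviour at $\xi_0$ and the vanishing at $\z_0$. First I would normalize, using a rotation, so that $\xi_0=1$; the point $\z_0$ then becomes some fixed point on $\bt\setminus\{1\}$. The classical device (already used in \cite{bgk1} to prove Theorem~A) is to pass to the half-plane through $w\mapsto z=\frac{w-1}{w+1}$ or, more convenient here, to apply a power map of the form $z\mapsto \psi(z)$ that maps $\bd$ into $\bd$, sends the boundary neighbourhood of $\xi_0$ to a neighbourhood of a boundary point, and converts the factor $|z-\xi_0|^{-q}$ into a pure radial weight $(1-|\psi|)^{-q'}$ for a suitable exponent. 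Concretely, after the Cayley transform the estimate \eqref{grow1} becomes, in the half-plane $\C_+$, a bound of the shape $\log|g(w)|\le K\,\dfrac{|w-w_0|^{r}}{(\re w)^{p}\,|w+1|^{\#}}$ near $w=\infty$ (the image of $\xi_0$), with polynomial control elsewhere; one then composes with $w\mapsto w^{\alpha}$ for an appropriate $\alpha>0$ chosen so that the new function satisfies a clean radial growth bound with exponent $p+q$ (heuristically, because $|z-\xi_0|\asymp 1-|z|$ only in a Stolz angle, and the sharp loss $\ep$ comes precisely from comparing the tangential and radial scales).

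The key steps, in order, are: (1) reduce by rotation to $\xi_0=1$ and localize, replacing $f$ by $f$ times a finite Blaschke-type correction so that the zeros near $\z_0$ and away from $\xi_0$ are handled by the elementary estimate \eqref{golub}, leaving only the zeros clustering at $\xi_0$; (2) perform the conformal substitution $h=f\circ\sigma$ with $\sigma$ a composition of a Möbius map and a power map, tracking how the weight $\frac{|z-\z_0|^r}{(1-|z|)^p|z-\xi_0|^q}$ transforms — here the exponent $r$ on the numerator produces, after the power map, a factor $(1-|h\text{-variable}|)^{-(p-\min(p,r))_+}$ improvement, which is exactly the source of the denominator $|\z-\z_0|^{\min(p,r)}$ in \eqref{btc11}; (3) apply \eqref{golub} (or rather its proof via the Jensen/Poisson formula) to $h$ to get $\sum(1-|\eta|)^{p+q+1+\ep}\le C K$ over the zeros $\eta$ of $h$; (4) pull this back through $\sigma$, using that $1-|\eta|\asymp \dfrac{1-|\z|}{|\z-\xi_0|^{?}}$ on the relevant region, to recover \eqref{btc11}; (5) treat the borderline case $q<1-\ep$, where $(q-1+\ep)_+=0$, separately, and note the claimed improvement when $p=0$.

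I expect the main obstacle to be step (2)–(4): getting the bookkeeping of exponents exactly right under the conformal map, in particular verifying that the power map can be chosen uniformly so that the transformed growth bound is genuinely radial (no leftover tangential singularity) and that the Jacobian-type comparison $1-|\eta|\asymp (1-|\z|)|\z-\xi_0|^{-1}$, valid in a Stolz sector at $\xi_0$, can be combined with a crude estimate outside the sector without losing more than an $\ep$ in the exponent. A secondary technical point is ensuring the normalization $|f(0)|=1$ is not destroyed by the substitution; this is fixed by dividing out a bounded Blaschke factor and absorbing the resulting $O(1)$ additive constant into $K$, exactly as in the proof of Theorem~A in \cite{bgk1}. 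The numerator term $|z-\z_0|^r$ is new relative to Theorem~A, but since it only \emph{helps} (it makes $f$ smaller near $\z_0$), the role of $r$ is simply to allow a more aggressive power map near $\z_0$, capped at $\min(p,r)$ because beyond that the gain in the growth bound no longer improves the radial exponent that \eqref{golub} can exploit.
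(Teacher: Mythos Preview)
Your plan has a genuine gap at the heart of step~(2). You normalize so that $\xi_0=1$ and then propose a single conformal substitution (Cayley plus power map) centred at $\xi_0$; this is exactly the device that proves Theorem~A and will indeed recover the factor $|\z-\xi_0|^{(q-1+\ep)_+}$. But it cannot produce the new gain $|\z-\z_0|^{-\min(p,r)}$. After a power map at $\xi_0$, the point $\z_0\not=\xi_0$ is sent to an ordinary boundary point $w_0$ at bounded distance from the centre, and the numerator $|z-\z_0|^r$ simply becomes $|w-w_0|^r$ times a harmless Jacobian factor; it does \emph{not} collapse into a radial weight $(1-|w|)^{\cdots}$. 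Your claimed improvement ``$(1-|h\text{-variable}|)^{-(p-\min(p,r))_+}$'' has no mechanism behind it. Likewise, the pull-back comparison $1-|\eta|\asymp (1-|\z|)/|\z-\xi_0|$ you invoke in step~(4) holds only in a Stolz angle at $\xi_0$, whereas the zeros for which the $\z_0$-gain matters cluster near $\z_0$, hence lie \emph{outside} any such Stolz angle. So your single-map reduction treats the $\xi_0$-singularity correctly but is blind to the $\z_0$-structure.

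The paper's argument is organized the other way round: it first proves the case $q=0$ by working at $\z_0$ (normalizing $\z_0=1$), mapping an exhausting family of Stolz angles $\css_{A}(\z_0)$ conformally onto $\bd$. Inside $\css_A$ one has $|z-\z_0|\le A(1-|z|)$, so the growth bound becomes $\log|f|\le C K A^{\min(p,r)}(1-|z|)^{-(p-r)_+}$, i.e.\ genuinely radial; Poisson--Jensen (or \eqref{golub}) applied on each $\css_{2^k}$ and summed over $k$ yields the $|\z-\z_0|^{-\min(p,r)}$ denominator. Only afterwards is the $\xi_0$-singularity reinstated, by a localization (Proposition~\ref{local}): near $\z_0$ the factor $|z-\xi_0|^{-q}$ is bounded and the $q=0$ result applies; away from $\z_0$ the factor $|z-\z_0|^r$ is bounded below and Theorem~A applies. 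The essential idea you are missing is that the numerator $|z-\z_0|^r$ must be exploited by a family of conformal maps anchored at $\z_0$, not at $\xi_0$.
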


\smallskip

The paper is organized in a straightforward manner. The preliminaries are given in Section \ref{s11}. In Sections \ref{s12} and \ref{s15}
we prove Theorem \ref{th03} and then deduce the general statements in Theorems \ref{th02} and \ref{th04} from
this one-point version. Some further results (the analogs for the upper half-plane and the plane with a cut) are given in Section \ref{s16}.

To keep the notation reasonably simple and consistent, we usually number the constants $C_k$ appearing in the formulations of theorems,
propositions, etc. The constants $C$ arising in the proofs are generic, i.e., the same symbol does not necessarily denote the same
constant in different occurrences.

\section{Conformal mappings, Pommerenke lemma and Stolz angles}\label{s11}

We start with some general preliminaries from Complex Analysis.

The known distortion inequalities \cite[Corollary 1.4]{po} play a key role in what follows.

\begin{lemma}\label{pom}
Let $\oo$ be a bounded, simply connected domain with the boundary $\pt\oo$, and $\p$ be a conformal mapping of $\oo$ onto $\bd$. Then
\begin{equation}\label{boundpom}
\frac12\dist(w,\pt\oo)\cdot|\p'(w)|\le 1-|\p(w)|\le 4\dist(w,\pt\oo)\cdot|\p'(w)|, \qquad w\in\oo.
\end{equation}
\end{lemma}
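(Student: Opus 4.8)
The statement to prove is Lemma~\ref{pom}, which is attributed to Pommerenke \cite{po} (the Koebe distortion theorem for conformal maps). Here is how I would organize a proof.

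\medskip

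\textbf{Plan.} The plan is to reduce the general simply connected domain $\oo$ to the unit disk by precomposing with the inverse map, and then invoke the classical Koebe $\tfrac14$-theorem together with the standard growth/distortion estimates for univalent functions on $\bd$. Fix $w_0\in\oo$ and set $a=\p(w_0)\in\bd$. Let $\psi=\p^{-1}\colon\bd\to\oo$, so $\psi$ is univalent. First I would consider the point $w_0$ and reduce to estimating $\dist(w_0,\pt\oo)$ in terms of $|\psi'(a)|$ and $1-|a|^2$; once we have
$$
\tfrac14\,(1-|a|^2)\,|\psi'(a)|\le\dist(w_0,\pt\oo)\le (1-|a|^2)\,|\psi'(a)|,
$$
the lemma follows by writing $|\psi'(a)|=1/|\p'(w_0)|$, substituting $a=\p(w_0)$, using $1-|a|\le 1-|a|^2\le 2(1-|a|)$, and absorbing the factor $2$ into the constants $1/2$ and $4$ (which are deliberately not sharp in the statement).

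\medskip

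\textbf{Key steps.} To prove the two-sided bound on $\dist(w_0,\pt\oo)$: compose $\psi$ with a disk automorphism to move $a$ to the origin. Define $g(\z)=\psi\!\left(\dfrac{\z+a}{1+\bar a\z}\right)$ for $\z\in\bd$; this is univalent on $\bd$ with $g(0)=w_0$ and, by the chain rule, $|g'(0)|=(1-|a|^2)\,|\psi'(a)|$. Now apply the Koebe $\tfrac14$-theorem to the normalized univalent function $\z\mapsto (g(\z)-w_0)/g'(0)$: its image contains the disk of radius $\tfrac14$ centered at $0$, hence $g(\bd)=\oo$ contains the disk $\{w:|w-w_0|<\tfrac14|g'(0)|\}$. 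Since $\oo$ is simply connected and bounded, $\pt\oo$ lies outside this disk, giving $\dist(w_0,\pt\oo)\ge\tfrac14|g'(0)|=\tfrac14(1-|a|^2)|\psi'(a)|$, the lower bound. For the upper bound, use the standard consequence of the growth theorem: for a univalent $h$ on $\bd$ with $h(0)=0$, $|h(\z)|\le |h'(0)|\,|\z|/(1-|\z|)^2$, which combined with a normal-families/Cauchy-estimate argument (or directly: $\dist(h(0),\pt h(\bd))\le |h'(0)|$ for the normalized coordinate and then rescale) yields $\dist(w_0,\pt\oo)\le |g'(0)|=(1-|a|^2)|\psi'(a)|$. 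Finally translate everything back through $\p'(w_0)=1/\psi'(a)$.

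\medskip

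\textbf{Main obstacle.} None of this is genuinely hard — it is a packaging of the Koebe distortion theory — so the only real ``obstacle'' is bookkeeping: making sure the automorphism-conjugation is set up correctly so that the factor $1-|a|^2$ appears with the right power, and checking that the slightly lossy constants $\tfrac12$ and $4$ in \eqref{boundpom} comfortably cover the passage from $1-|a|^2$ to $1-|\p(w)|$. Since the paper cites \cite[Corollary 1.4]{po} directly, in the write-up I would simply recall that the estimate \eqref{boundpom} is precisely that corollary, and, if desired, sketch the one-line reduction to the Koebe $\tfrac14$-theorem as above rather than reproving it from scratch. Boundedness and simple connectivity of $\oo$ are used exactly once — to guarantee that the Koebe disk inside $\oo$ is a genuine obstruction-free neighborhood of $w_0$ and that $\p$ is a well-defined conformal equivalence in the first place.
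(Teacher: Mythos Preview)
Your proposal is correct: the argument via the disk automorphism and the Koebe $\tfrac14$-theorem is the standard route to this distortion estimate, and your bookkeeping with $1-|a|^2$ versus $1-|a|$ checks out and lands exactly on the constants $\tfrac12$ and $4$. The paper itself does not prove the lemma at all --- it simply quotes it as \cite[Corollary~1.4]{po} --- so there is nothing to compare against beyond noting that your sketch is precisely the proof behind that corollary. One small remark: boundedness of $\oo$ is not actually needed for \eqref{boundpom}; simple connectivity (and $\oo\neq\bc$) suffices, since the Koebe and Schwarz-lemma steps only require $\psi$ to be univalent on $\bd$.
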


This result will be applied in the following situation, wherein the bounds on derivatives
can be specified. It is related to the Stolz angle with the vertex at $\z_0\in\bt$, that is, a domain
inside the unit disk of the form
\begin{equation}\label{stang}
\css_A(\z_0):=\left\{z\in\bd: \frac{|z-\z_0|}{1-|z|}<A\right\}, \qquad A>1.
\end{equation}
When $\z_0=1$, we use the abbreviation $\css_A:=\css_A(1)$, see Figure \ref{fig01}.
The interior angle of $\css_A$ at $1$ equals $2\o$, $0<\o:=\arccos A^{-1}<\pi/2$.  The Stolz angles $\{\css_A\}_{A>1}$
form an increasing family of sets which exhaust the unit disk as $A\to\infty$. The boundary  of $\css_A$ is denoted by
$\pt\css_A$.

\begin{figure}[htbp]
\includegraphics[width=10cm]{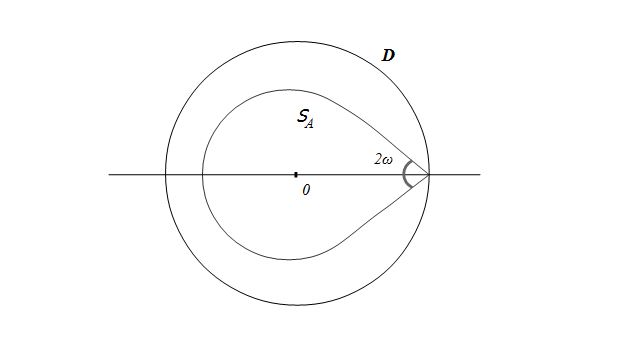}
\caption{Stolz angle $\css_A=\css_A(1), \  A>1, \ \o=\arccos A^{-1}$.}
\label{fig01}
\end{figure}

Let $\p_A$ denote the conformal mapping $\p_A: \css_A\to\bd$, $\p_A(0)=0$, $\p_A(1)=1$.
The following result provides a local uniform bound for its derivative $\p_A'$.

\begin{lemma}\label{dercm}
Let
\begin{equation*}
\a=\a_A:=\frac{\pi}{2\o}=\frac{\pi}{2\arccos A^{-1}}\,, \quad \a>1.
\end{equation*}
Then the following bounds hold uniformly for $A\ge2$:
\begin{equation}\label{boundder}
\frac1{16}<\frac{|\p'_A(z)|}{|z-1|^{\a-1}}<48, \qquad z\in\css_A^+:=\css_A\bigcap\left\{|z-1|<\frac1{16}\right\}.
\end{equation}
\end{lemma}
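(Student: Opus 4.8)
The plan is to reduce everything to an explicit formula for the conformal map $\p_A$ of the Stolz angle $\css_A$ onto $\bd$. First I would describe $\css_A$ near its vertex $1$: it is (up to higher-order corrections) the image of a sector of opening $2\o$ with vertex at $1$. A convenient way to build $\p_A$ is to compose three standard maps: translate so that the vertex is at the origin ($z\mapsto z-1$), then apply the power map $w\mapsto (e^{i\theta_0}w)^{\a}$ with $\a=\pi/(2\o)$ to straighten the sector into a half-plane (choosing the rotation $\theta_0$ so that the two sides of $\css_A$ are sent to the real axis), and finally a Möbius transformation from the half-plane to $\bd$ normalized so that $\p_A(0)=0$, $\p_A(1)=1$. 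Since $\css_A$ is only \emph{asymptotically} a sector — its two straight sides meet the circular arc $\{|z-1|=\text{const}\}$ forming the ``far'' boundary — this composition does not give $\p_A$ exactly, but it gives a conformal map onto a domain $\oo'$ that coincides with $\css_A$ in a fixed neighborhood $\{|z-1|<c\}$ and differs from it only in a region bounded away from $1$. I would then invoke Lemma \ref{pom} on $\css_A$ directly, rather than chase the exact formula.

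Concretely, here is the core of the argument. Apply Lemma \ref{pom} with $\oo=\css_A$ and the conformal map $\p=\p_A:\css_A\to\bd$. For $z\in\css_A^+$, i.e.\ $|z-1|<1/16$, the nearest boundary point of $\css_A$ lies on one of the two straight sides emanating from $1$ (the far arc is at distance $\ge$ some fixed constant), and an elementary trigonometric computation gives
\begin{equation*}
\dist(z,\pt\css_A)=|z-1|\,\sin\psi(z),
\end{equation*}
where $\psi(z)$ is the angle between the segment $[1,z]$ and the nearer side; in particular $\sin\psi(z)$ is comparable (with absolute constants) to $1-|\p_A(z)|$ divided by $|z-1|\,|\p_A'(z)|$ by \eqref{boundpom}. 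Separately, $1-|\p_A(z)|$ must be estimated: here I would use the explicit ``straightening'' map above. Since $\p_A$ agrees with the explicit composition on $\{|z-1|<c\}$ (a fixed $c>0$ depending only on the opening, hence uniform for $A\ge2$ where $\o\le\pi/4$), and the explicit map behaves like $\mathrm{const}\cdot(z-1)^{\a}$ near $1$ after the final Möbius normalization is accounted for, one gets
\begin{equation*}
1-|\p_A(z)|\asymp |z-1|^{\a}
\end{equation*}
with absolute constants on $\css_A^+$. Combining this with \eqref{boundpom} and $\dist(z,\pt\css_A)\asymp|z-1|$ (valid on a Stolz-type region: the point $z$ is ``well inside'' the angle, so the nearest side is at distance comparable to $|z-1|$, again uniformly for $A\ge2$) yields $|\p_A'(z)|\asymp |z-1|^{\a-1}$, which is exactly \eqref{boundder}; one then tracks the absolute constants through each inequality to land on $1/16$ and $48$.

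The main obstacle is \textbf{uniformity in $A$}: the constants $\frac1{16}$ and $48$ must not degenerate as $A\to\infty$ (equivalently $\o\to\pi/2$, $\a\to1$), nor as $A\to2$. The power map $(z-1)^{\a}$ and its derivative $\a(z-1)^{\a-1}$ introduce the factor $\a$, which ranges over a bounded interval $(1,2]$ for $A\ge2$, so that is harmless; the delicate points are (i) that the neighborhood $\{|z-1|<1/16\}$ on which $\p_A$ coincides with the explicit map can be taken the \emph{same} for all $A\ge2$ — this needs the geometry of $\css_A$ (its far boundary arc) to stay at distance bounded below by an absolute constant near the vertex, which holds since the sides have fixed slopes bounded away from vertical; and (ii) controlling the Möbius normalization factor (from $\p_A(0)=0$, $\p_A(1)=1$) uniformly — the point $0$ sits at a location in the straightened half-plane that is comparable to an absolute constant for all $A\ge2$, so the normalizing derivative is pinched between two absolute constants. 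I would organize the write-up so that each of these uniformity claims is isolated as a short elementary lemma or remark, then assemble the chain of $\asymp$'s and finally optimize the numerical constants to verify $\frac1{16}$ and $48$ work; I expect the numbers to be comfortable, not tight.
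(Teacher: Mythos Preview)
Your approach has a genuine gap at two linked places.

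First, the Stolz domain $\css_A=\{z:|z-1|<A(1-|z|)\}$ does \emph{not} have straight sides; its boundary $|z-1|=A(1-|z|)$ is curved all the way down to the vertex (only tangent to the rays of angle $\pm\o$ there). So there is no neighborhood $\{|z-1|<c\}$ on which $\css_A$ coincides with a true sector, and your sentence ``$\p_A$ agrees with the explicit composition on $\{|z-1|<c\}$'' has no basis. Second, and more seriously, neither of the two intermediate relations you feed into Lemma~\ref{pom} is true on all of $\css_A^+$: as $z\in\css_A^+$ approaches $\pt\css_A$, both $1-|\p_A(z)|$ and $\dist(z,\pt\css_A)$ tend to $0$, whereas $|z-1|^\a$ and $|z-1|$ do not. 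Hence $1-|\p_A(z)|\asymp |z-1|^\a$ and $\dist(z,\pt\css_A)\asymp|z-1|$ both fail, and you cannot obtain $|\p_A'(z)|\asymp|z-1|^{\a-1}$ by dividing them. (Your parenthetical ``the point $z$ is well inside the angle'' is exactly what is \emph{not} assumed.) Lemma~\ref{pom} only tells you the \emph{ratio} $(1-|\p_A(z)|)/\dist(z,\pt\css_A)\asymp|\p_A'(z)|$; to identify that ratio as $|z-1|^{\a-1}$ uniformly you need genuine information about $\p_A$, not just its existence.

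The paper closes this gap by producing an explicit formula. The linear-fractional map $\psi:\bd\to\bc_r$, $\psi(0)=1$, $\psi(1)=\infty$, sends $\css_A$ onto the interior of the right branch of the hyperbola $x^2/\cos^2\o-y^2/\sin^2\o=1$; composing with $z\mapsto z+\sqrt{z^2-1}$, then $z\mapsto (z^{\pi/\o}+z^{-\pi/\o})/2$, then $z\mapsto\sqrt{1+z}$ and a final M\"obius map yields a closed-form expression for $\p_A$ and hence for $\p_A'$. With that formula in hand the bounds $1/16$ and $48$ follow from elementary estimates of the individual factors, uniform because $1<\a\le 3/2$ for $A\ge2$. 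Your sector-straightening idea could in principle be rescued via a Warschawski-type comparison between $\css_A$ and its tangent sector, but that is substantially more work than you indicate, and obtaining specific numerical constants that way would be awkward.
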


\begin{proof}
We just sketch the proof which is rather standard.
Let $\psi:\bd\to\bc_r:=\{z: \re z>0\}$ be the linear-fractional mapping of $\bd$ onto the right half-plane,
$\psi(0)=1$, $\psi(1)=\infty$.
A crucial observation is that $\psi$ maps $\css_A$ onto the interior $H_i$ of the right branch of the hyperbola
\footnote{We thank D. Tulyakov for this remark.}
$$
H: \ \frac{x^2}{\cos^2\o}-\frac{y^2}{\sin^2\o}=1, \quad z=x+iy.
$$
Set $\mathbb C_\pm=\{z:\pm \im z>0\}$, $H_\pm=H_i\cap \mathbb C_\pm$, and define
$\phi_1(z)=z+\sqrt{z^2-1}=\exp(\text{\rm arch}\,z)$,
$\phi_1:H_\pm\to A_\pm=\{re^{i\theta}:r>1,\,\pm\theta\in(0,\o)\}$, $\phi_2(z)=(z^{\pi/\o}+z^{-\pi/\o})/2$,
$\phi_2:A_\pm\to \mathbb C_\pm$, $\phi_3(z)=\sqrt{1+z}$,  $\phi_3:\mathbb C_\pm\to \mathbb C_\pm\cap \bc_r$.
Since $\phi_3\circ \phi_2\circ \phi_1$ extends continuously to $H_i\cap\mathbb R$, we obtain a conformal map
$\phi:H_i\to \bc_r$, $\phi(1)=\sqrt2$, $\phi(\infty)=\infty$. Finally, if $\phi_4(z)=(z-\sqrt2)/(z+\sqrt2)$, then
$$
\p_A=\phi_4\circ\phi\circ\psi=
\biggl(\frac{(1\pm\sqrt{z})^\alpha-(1\mp\sqrt{z})^\alpha}{(1\pm\sqrt{z})^\alpha+(1\mp\sqrt{z})^\alpha}\biggr)^2=
1-\frac{4(1-z)^\alpha}{((1\pm\sqrt{z})^\alpha+(1\mp\sqrt{z})^\alpha)^2}
$$
(see also Lavrent'ev--Shabat \cite[Chapter 2.3.36]{lasha}).
Next,
\begin{equation}\label{deriv}
|\p_A'(z)|=\frac{4\a|1-z|^{\a-1}}{\sqrt{|z|}}\cdot
\frac{|(1\pm\sqrt{z})^\alpha-(1\mp\sqrt{z})^\alpha|}{|(1\pm\sqrt{z})^\alpha+(1\mp\sqrt{z})^\alpha|^3}.
\end{equation}
Since $\a\le3/2$ for $A\ge2$, the elementary bounds
\begin{gather*}
\frac12\le \sqrt{|z|}\le1,\\
1\le |(1\pm\sqrt{z})^\alpha+(1\mp\sqrt{z})^\alpha|\le 4,\\
1\le |(1\pm\sqrt{z})^\alpha-(1\mp\sqrt{z})^\alpha|\le 4,
\end{gather*}
valid for $z\in\css_A^+$ yield \eqref{boundder}.
\end{proof}

The following simple relation between two Stolz angles is casted as a lemma for convenience only; its elementary proof is omitted.
\begin{lemma}\label{twosa}
Let $A<B$, so $\css_A\subset\css_B$. Then, for $z\in\css_A$,
\begin{equation}\label{saAB}
\frac{B-A}{B+1}\,(1-|z|)\le\dist(z,\pt\css_B)<1-|z|.
\end{equation}
\end{lemma}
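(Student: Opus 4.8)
The plan is to reduce both inequalities in \eqref{saAB} to elementary properties of the single auxiliary function
$$
g(w):=|w-1|-B(1-|w|)=|w-1|+B|w|-B,
$$
which detects membership in $\css_B$: one has $\css_B=\{w\in\bd:\ g(w)<0\}$, and I would first record that $g$ vanishes on \emph{all} of $\pt\css_B$. On the ``circular'' part of the boundary this is the defining relation $|w-1|=B(1-|w|)$; and since on $\bt$ one computes $g(w)=|w-1|\ge0$ with equality only at $w=1$, the single boundary point lying on $\bt$ is the vertex $w=1$, where again $g(1)=0$. This fact is what prevents the vertex from spoiling the Lipschitz estimate below.

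For the lower bound I would exploit that $g$ is globally Lipschitz. As $w\mapsto|w-1|$ and $w\mapsto|w|$ are both $1$-Lipschitz, one gets $|g(w_1)-g(w_2)|\le(B+1)|w_1-w_2|$. For $z\in\css_A$ the hypothesis $|z-1|<A(1-|z|)$ gives $g(z)<(A-B)(1-|z|)$, so that $-g(z)>(B-A)(1-|z|)$. Since every $w\in\pt\css_B$ satisfies $g(w)=0$, the Lipschitz bound yields $|z-w|\ge -g(z)/(B+1)\ge (B-A)(1-|z|)/(B+1)$, and taking the infimum over $w\in\pt\css_B$ proves the left inequality, with the stated constant $(B-A)/(B+1)$ appearing automatically.

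For the upper bound I would show that the Euclidean disk $D=\{w:\ |w-z|<1-|z|\}$, which is contained in $\bd$ and whose closure meets $\bt$ only at $u:=z/|z|$, is never entirely contained in $\css_B$; exhibiting one point of $\overline{D}$ outside $\css_B$ and using $z\in\css_A\subset\css_B$ forces the segment joining the two to cross $\pt\css_B$ at distance strictly below $1-|z|$. When $u\ne1$ this is immediate, since $u\in\overline{D}\cap\bt$ lies outside $\overline{\css_B}$, so the radius $[z,u]$ already crosses $\pt\css_B$ before reaching $u$. The case $u=1$, i.e.\ $z=r\in(0,1)$, is the main obstacle, because here the nearest boundary point could a priori be the vertex $1$ itself, which sits at distance exactly $1-r$. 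I would resolve it by perturbing along the bounding circle $w(\phi)=r+(1-r)e^{i\phi}$: a short computation gives $|w(\phi)-1|=(1-r)\,2|\sin(\phi/2)|$ and $1-|w(\phi)|\sim r(1-r)\phi^2/2$, whence $g(w(\phi))\sim (1-r)\phi>0$ for small $\phi>0$. Thus $w(\phi)$ lies outside $\css_B$, and the crossing of $\pt\css_B$ along $[z,w(\phi)]$ occurs at distance $<1-r$, giving the strict upper bound.

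The only points requiring genuine care are the verification that $g\equiv0$ on $\pt\css_B$ and the sign of $g(w(\phi))$ near $\phi=0$ in the radial case; both are routine once set up. I would emphasize that the strictness in the upper bound is free from the segment-crossing argument (the crossing point is an interior point of $D$), while the infimum in the lower bound accounts for the non-strict ``$\le$'' there.
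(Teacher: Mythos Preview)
Your argument is correct. The paper explicitly omits the proof of this lemma (``its elementary proof is omitted''), so there is no authorial proof to compare against; your Lipschitz approach via $g(w)=|w-1|-B(1-|w|)$ for the lower bound and the segment-crossing argument for the strict upper bound are clean and self-contained. Two minor remarks: the case $z=0$ is not covered by $u=z/|z|$, but it is trivial since then $D=\bd$ and any $u\in\bt\setminus\{1\}$ works; and in the radial perturbation you should perhaps write $g(w(\phi))=(1-r)|\phi|+O(\phi^2)$ rather than ``$\sim(1-r)\phi$'' to make the positivity transparent.
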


\medskip

For $0<a<1$, consider a nested family of domains (curvilinear quadrangles) $\{L_a\}$, see Figure \ref{fig02},
\begin{equation}\label{lune}
L_{a_1}\subset L_{a_2}\subset\bd, \qquad 0<a_1<a_2<1.
\end{equation}
We denote by $\eta=\eta_a$ the conformal mapping of $L_a$ onto $\bd$ with normalization $\eta(0)=0$, $\eta(1)=1$,
and write $\eta_j$, $j=1,2$ for the domains $L_{a_j}$. Although there is no explicit formula for $\eta$, it is easily seen from
\cite[Theorem 3.9]{po} that both $\eta$ and $\eta'$ have continuous extensions on the closure $\overline{L}_a$, and so
\begin{equation}\label{conf2}
|\eta'(z)|\le c(a), \qquad z\in L_a.
\end{equation}

\begin{figure}[htbp]
\includegraphics[width=10cm]{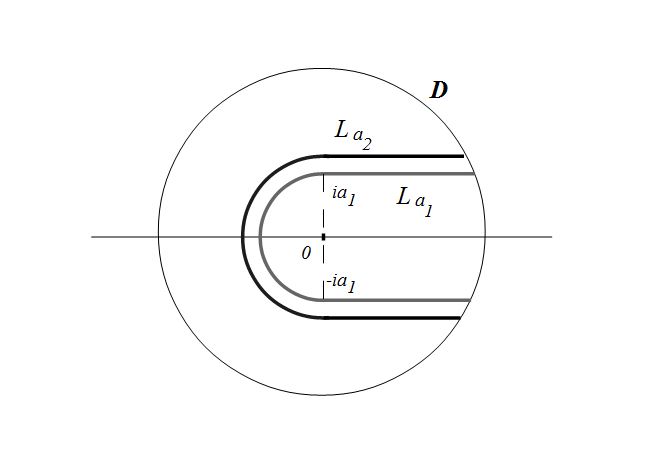}
\caption{Domains $L_{a_1},\, L_{a_2}, 0<a_1<a_2$.}
\label{fig02}
\end{figure}

The relations below follow directly from \eqref{conf2} and Lemma \ref{pom}. First,
\begin{equation}\label{bounlune1}
1-|\eta_2(z)| \le c_1(a_2)(1-|z|), \qquad z\in L_{a_2},
\end{equation}
holds with some positive constant $c_1(a_2)$. Next, since $1$ is a regular point for $\eta$ ($\eta$ is analytic at some 
neighborhood of $1$),
\begin{equation}\label{bounlune2}
c_2(a_2)\le \frac{|1-\eta_2(z)|}{|1-z|}\le c_3(a_2),  \qquad z\in L_{a_2},
\end{equation}
holds with some positive constants $c_j(a_2)$, $j=2,3$. Then, there are positive constants $c_j=c_j(a_1,a_2), \, j=4,5,$ such that
\begin{equation}\label{bounlune3}
c_4(a_1,a_2)\le \frac{1-|\eta_2(z)|}{1-|z|}\le c_5(a_1,a_2), \qquad z\in L_{a_1}.
\end{equation}
We will exploit these relations later in Section \ref{s15}.

\section{Proof of Theorem \ref{th03} for $q=0$}\label{s12}

Without loss of generality we assume that $\z_0=1$.
By \eqref{grow1},
$$ \log|f(z)|\le \frac{2^rK}{(1-|z|)^p}, \qquad z\in\bd,$$
and, by \eqref{golub},  for each $\ep>0$,
\begin{equation}\label{bgk1.02}
\sum_{\z\in Z(f)} (1-|\z|)^{p+1+\ep}\le C(p,r,\ep)\cdot K.
\end{equation}

To clarify the local character of the problem, put
\begin{equation}\label{zeropm}
Z^+(f):=Z(f)\bigcap\left\{|z-1|<\frac1{16}\right\}, \ \
Z^-(f):=Z(f)\bigcap\left\{|z-1|\ge\frac1{16}\right\},
\end{equation}
so that with $s:=\min(p,r)$ we have
\begin{equation*}
\sum_{\z\in Z(f)} \frac{(1-|\z|)^{p+1+\ep}}{|1-\z|^{s}}=
\left\{\sum_{\z\in Z^+(f)} + \sum_{\z\in Z^-(f)}\right\}
\frac{(1-|\z|)^{p+1+\ep}}{|1-\z|^{s}}=\ss^+ + \ss^-.
\end{equation*}
The bound for $\ss^-$ follows directly from \eqref{bgk1.02} and the inequality $|1-\z|\ge 1/16$:
\begin{equation}\label{sigma2}
\ss^-=\sum_{\z\in Z^-(f)}\frac{(1-|\z|)^{p+1+\ep}}{|1-\z|^{s}} \le C(p,r,\ep)\cdot K, \quad p,r\ge0.
\end{equation}
Thus, the main problem is to prove \eqref{btc11} for $\ss^+$.

\subsection{Case $p\le r$}\label{s13}

Given a function $f\in \ca(\bd)$ and a number $A\ge2$, put (see \eqref{zeropm} and \eqref{boundder})
\begin{equation}\label{zprime}
Z_A(f):=Z^+(f)\cap\css_A=Z(f)\cap\css_A^+.
\end{equation}

\nt
{\bf Step 1}.
Let $\psi_A=\p_A^{(-1)}$ be the conformal mapping from $\bd$ onto $\css_A$, $\psi_A(0)=0$.
Set $f_A=f(\psi_A)$. Then $f_A\in\ca(\bd)$, $|f_A(0)|=1$ and by \eqref{grow1} we have
$$
\log|f_A(w)|\le K\,\frac{|1-\psi_A(w)|^r}{(1-|\psi_A(w)|)^p}\le 2^{r-p}A^p\cdot K,
\qquad w\in\bd.
$$
The Poisson--Jensen formula implies
\begin{equation}\label{poijen}
\sum_{w\in Z(f_A)} (1-|w|)\le 2^{r-p}A^p\cdot K.
\end{equation}
However, $Z(f_A)=\p_A(Z(f)\cap\css_A)$, and so
\begin{equation}
\sum_{w\in Z(f_A)} (1-|w|)=\sum_{\z\in Z(f)\cap\css_A} (1-|\p_A(\z)|)\le 2^{r-p}A^p\cdot K.
\end{equation}
By Lemma \ref{pom},
$$
\sum_{\z\in Z(f)\cap\css_A} |\p'_A(\z)|\cdot\dist(\z,\pt\css_A)\le 2^{r-p+1}A^p\cdot K,
$$
and, since $Z(f)\cap\css_A\supset Z_A(f)=Z(f)\cap\css_A^+$, it follows from Lemma \ref{dercm} that for $A\ge 2$
\begin{equation*}
\begin{split}
\sum_{\z\in Z(f)\cap\css_A} |\p'_A(\z)|\cdot\dist(\z,\pt\css_A) &\ge \sum_{\z\in Z_A(f)} |\p'_A(\z)|\cdot\dist(\z,\pt\css_A) \\
&\ge \frac1{32}\,\sum_{\z\in Z_A(f)} |\z-1|^{\a-1}\cdot\dist(\z,\pt\css_A).
\end{split}
\end{equation*}
Hence,
\begin{equation}\label{step1}
\sum_{\z\in Z_A(f)} |1-\z|^{\a-1}\cdot\dist(\z,\pt\css_A)\le 2^{r-p+6}A^p\cdot K, \quad \a=\frac{\pi}{2\arccos A^{-1}}\,.
\end{equation}

\nt
{\bf Step 2}. In what follows $A=A_k=2^k$, $k\in\bn$, so the Stolz angles $\css_k:=\css_{A_k}$
(with a little abuse of notation) exhaust the unit disk, as $k\to\infty$.
Relation \eqref{step1} with $A=A_{k+1}$ takes the form
\begin{equation*}
\sum_{\z\in Z_{k+1}} |1-\z|^{\a_{k+1}-1}\cdot\dist(\z,\pt\css_{k+1})\le 2^{kp+r+6}K,
\quad Z_k:=Z_{A_k}(f)=Z^+(f)\cap\css_{k},
\end{equation*}
see \eqref{zeropm}, \eqref{zprime}, or, since $Z_k\subset Z_{k+1}$,
\begin{equation}\label{step11}
\sum_{\z\in Z_k} |1-\z|^{\b_{k+1}}\cdot\dist(\z,\pt\css_{k+1}) \le 2^{kp+r+6}K, \quad \b_{k+1}:=\a_{k+1}-1.
\end{equation}

To apply Lemma \ref{twosa} with $A=2^k$, $B=2^{k+1}$, notice that
$$ \frac{B-A}{B+1}=\frac{2^{k+1}-2^k}{2^{k+1}+1}\ge\frac25\,, $$
so \eqref{step11} entails
\begin{equation}\label{bound1}
\sum_{\z\in Z_k} (1-|\z|)|1-\z|^{\b_{k+1}}\le 5K\cdot2^{kp+r+5}=C(r)\,  2^{kp}\cdot K,
\end{equation}
for $k\in\bn$. It is convenient to deal with a chain of inequalities
\begin{equation*}
\sum_{\z\in Z_k\backslash Z_{k-1}} (1-|\z|)|1-\z|^{\b_{k+1}}\le C(r)\, 2^{kp}\cdot K, \quad k\in\bn,
\qquad Z_0:=\emptyset.
\end{equation*}
Take an arbitrary $0<\ep<1/16$ and write
\begin{equation}\label{bound2}
\frac1{2^{k(p+\ep)}}\,\sum_{\z\in Z_k\backslash Z_{k-1}} (1-|\z|)|1-\z|^{\b_{k+1}}
\le  C(r)\, 2^{-\ep k}\cdot K.
\end{equation}
On the set $Z_k\backslash Z_{k-1}$ we have
$$ 2^{-k}< \frac{1-|\z|}{|1-\z|}\le 2^{-k+1}, \qquad
\left(\frac{1-|\z|}{|1-\z|}\right)^{p+\ep}\le\frac{2^{p+\ep}}{2^{k(p+\ep)}}\,, $$
and so
\begin{equation}\label{bound3}
\sum_{\z\in Z_k\backslash Z_{k-1}} \frac{(1-|\z|)^{p+\ep+1}}{|1-\z|^{p+\ep}}\,
|1-\z|^{\b_{k+1}}\le C(p,r)\, 2^{-\ep k}\cdot K.
\end{equation}

\nt
{\bf Step 3}.  We have
$$ \a_k=\frac{\pi}{2\arccos 2^{-k}}, \qquad \b_k=\a_k-1=\frac{\arcsin 2^{-k}}{\arccos 2^{-k}}, $$
and as $x\le\arcsin x\le \pi x/2$ for $0\le x\le 1$, and $\arccos 1/2=\pi/3$, we see that
\begin{equation}\label{alpha}
\frac2{\pi}\le 2^k\b_k\le \frac32\,.
\end{equation}
By definition, $\b_k\searrow 0$ as $k\to\infty$. Now, choose $k_0=k_0(\ep)$ from the relations
\begin{equation}\label{kzero}
2^{-k_0-1}\le\ep< 2^{-k_0},
\end{equation}
and hence
\begin{equation}\label{inclusions}
\css_{k_0}\subset\css_{1/\ep}\subset\css_{k_0+1}, \quad
Z_{k_0}\subset Z^+(f)\cap\css_{1/\ep}\subset Z_{k_0+1}\,.
\end{equation}
By \eqref{alpha} and \eqref{kzero}, one has for $k\ge k_0+1$
$$
\b_{k+1}\le\b_{k_0+2}\le\frac32\,2^{-k_0-2}<\frac34\,\ep.
$$
Let $z\in Z^+(f)$. Since $|1-z|<1/16$, we see that $|1-z|^{\b_{k+1}}\ge|1-z|^\ep$. Hence, \eqref{bound3} implies that
\begin{equation}\label{bound4}
\sum_{\z\in Z_k\backslash Z_{k-1}} \frac{(1-|\z|)^{p+\ep+1}}{|1-\z|^{p}}
\le C(p,r)\, 2^{-\ep k}\cdot K, \quad k\ge k_0+1.
\end{equation}
Summation over $k$ from $k=k_0+1$ to infinity gives
\begin{equation}\label{bound5}
\sum_{\z\in Z^+(f)\backslash Z_{k_0}} \frac{(1-|\z|)^{p+\ep+1}}{|1-\z|^{p}}\le C(p,r,\ep)\cdot K.
\end{equation}
Next, write
$$ Z^+(f)=(Z^+(f)\cap\css_{1/\ep})\bigcup (Z^+(f)\cap\css_{1/\ep}^c), \quad \css_{1/\ep}^c:=\bd\backslash\css_{1/\ep}. $$
By \eqref{inclusions}, $Z^+(f)\backslash Z_{k_0}\supset Z^+(f)\cap\css_{1/\ep}^c$,
so \eqref{bound5} provides
\begin{equation}\label{bound6}
\sum_{\z\in Z^+(f)\cap\css_{1/\ep}^c} \frac{(1-|\z|)^{p+\ep+1}}{|1-\z|^{p}}\le C(p,r,\ep)\cdot K.
\end{equation}

On the other hand, put $k=k_0+1$  in \eqref{bound1}. By  \eqref{kzero},
$\b_{k_0+2}<\ep$, and \eqref{inclusions} implies that
\begin{equation}\label{bound7}
\sum_{\z\in Z^+(f)\cap\css_{1/\ep}} (1-|\z|) |1-\z|^{\ep}\le
\sum_{\z\in Z_{k_0+1}} (1-|\z|) |1-\z|^{\ep}\le C(r,\ep)\cdot K.
\end{equation}
The sum of \eqref{bound6} and \eqref{bound7} gives
\begin{equation}\label{bound7.1}
\sum_{\z\in Z^+(f)\cap\css_{1/\ep}^c} \frac{(1-|\z|)^{p+\ep+1}}{|1-\z|^{p}} +
\sum_{\z\in Z^+(f)\cap\css_{1/\ep}} (1-|\z|) |1-\z|^{\ep}\le C(p,r,\ep)\cdot K,
\end{equation}
and it remains to note again that the inequality  $|1-z|\ge1-|z|$ for $z\in\bd$ implies
$$
(1-|z|)|1-z|^\ep \ge\frac{(1-|z|)^{p+\ep+1}}{|1-z|^{p}}\,.
$$
Finally,
\begin{equation}\label{bound8}
\ss^+=\sum_{\z\in Z^+(f)} \frac{(1-|\z|)^{p+\ep+1}}{|1-\z|^{p}}\le C(p,r,\ep)\cdot K.
\end{equation}
Note that now $p=s=\min(p,r)$. A combination of \eqref{bound8} and \eqref{sigma2} completes the proof
of Theorem \ref{th03} in the case $q=0$, $p\le r$.

\subsection{Case $p>r$}\label{s14}

Let $f\in \ca(\bd)$, $|f(0)|=1$, satisfy
\begin{equation}
\log|f(z)|\le K\,\frac{|1-z|^r}{(1-|z|)^p}\,, \qquad z\in\bd,
\end{equation}
with  $0\le r<p$.
Recalling the notation $f_A=f(\psi_A)$ (see Section \ref{s13}) we have
\begin{equation*}
\log|f_A(w)| 
\le K\,\frac{|1-\psi_A(w)|^r}{\bigl(1-|\psi_A(w)|\bigr)^r}
\cdot\frac1{\bigl(1-|\psi_A(w)|\bigr)^{p-r}} 
\le \frac{K\,A^r}{\bigl(1-|\psi_A(w)|\bigr)^{p-r}}\,.
\end{equation*}
By the Schwarz lemma, $|\psi_A(w)|\le|w|$, and so
\begin{equation}
\log|f_A(w)|\le \frac{K\,A^r}{(1-|w|)^{p-r}}\,.
\end{equation}
As above in \eqref{bgk1.02}, we get for each $\ep>0$
\begin{equation}\label{sigma21}
\sum_{w\in Z(f_A)} (1-|w|)^{\g}\le C(p,r,\ep)\, A^r\cdot K,
\end{equation}
where $\g=\g(p,r,\ep):=p-r+1+\ep$.
So we come to \eqref{poijen} with exponent $\g$ instead of $1$.

The rest is essentially the same as in the argument for the case $p\le r$. For instance,
\eqref{step1} becomes
\begin{equation}\label{step111}
\sum_{\z\in Z_A(f)} |1-\z|^{\g(\a-1)}\cdot\dist^\g(\z,\pt\css_A)\le C(p,r)\, A^r\cdot K,
\end{equation}
and \eqref{bound3} turns into
\begin{equation}\label{bound31}
\sum_{\z\in Z_k\backslash Z_{k-1}} \frac{(1-|\z|)^{p+1+2\ep}}{|1-\z|^{r+\ep}}\,
|1-\z|^{\g\b_{k+1}}\le C(p,r)\; 2^{-\ep k}\cdot K.
\end{equation}
The choice of $k_0$ is somewhat different from \eqref{kzero}:
\begin{equation*}
2^{-k_0-1}\le \frac{\ep}{p-r+2} <2^{-k_0},
\end{equation*}
and again $\g\b_{k+1}\le\ep$ for $k\ge k_0+1$. Thereby we come to
\begin{equation}\label{bound51}
\sum_{\z\in Z^+(f)\backslash Z_{k_0}} \frac{(1-|\z|)^{p+1+2\ep}}{|1-\z|^{r}} \le C(p,r,\ep)\cdot K;
\end{equation}

compare this inequality to \eqref{bound5}. Finally,
\begin{equation}\label{bound81}
\sum_{\z\in Z^+(f)} \frac{(1-|\z|)^{p+1+2\ep}}{|1-\z|^{r}}\le C(p,r,\ep)\cdot K.
\end{equation}
A combination of \eqref{bound81} and \eqref{sigma2} completes the proof of
Theorem \ref{th03} for $q=0$.

\section{Proofs of Theorem \ref{th03} with $q>0$,  Theorems \ref{th02} and \ref{th04}}
\label{s15}

We proceed with a local version of the result obtained in Section \ref{s12}, see also Favorov--Golinskii \cite{fg2}.

\begin{proposition}\label{local}
Given the quadrangle $L_{a_2}$ on Figure $\ref{fig02}$, let $g\in \ca(L_{a_2})$ satisfy
\begin{equation}\label{loc1}
\log|g(w)|\le K\,\frac{|1-w|^r}{(1-|w|)^p}\,, \qquad w\in L_{a_2}, \quad p,r\ge0.
\end{equation}
Then for every $\ep>0$ and every $0<a_1<a_2$ there exists a positive constant
$C=C(p,r,\ep; a_1,a_2)$ such that
\begin{equation}\label{loc2}
\sum_{\z\in Z(g)\cap L_{a_1}} \frac{(1-|\z|)^{p+1+\ep}}{|\z-1|^{s}}\le C \cdot K, \qquad s=\min(p,r).
\end{equation}
\end{proposition}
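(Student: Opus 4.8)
The plan is to transfer the problem on the curvilinear quadrangle $L_{a_2}$ to the disk via the conformal map $\eta=\eta_{a_2}$ and then apply the $q=0$ case of Theorem~\ref{th03} already established in Section~\ref{s12}. First I would set $h:=g\circ\eta^{-1}\in\ca(\bd)$. After normalizing (dividing by a suitable constant so that $|h(0)|=1$, which only changes $K$ by a controlled factor since $\eta(0)=0$ and $0$ lies in the interior of $L_{a_2}$), I would use the growth hypothesis \eqref{loc1} together with the two-sided estimates \eqref{bounlune1} and \eqref{bounlune2} — namely $1-|\eta_2(z)|\le c_1(1-|z|)$ and $c_2\le|1-\eta_2(z)|/|1-z|\le c_3$ — to convert the bound on $\log|g(w)|$ into a bound of the form $\log|h(\omega)|\le C(a_2,p,r)\,K\,|1-\omega|^r/(1-|\omega|)^p$ for $\omega\in\bd$. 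This is the point where I must be a little careful: the inequalities \eqref{bounlune1}, \eqref{bounlune2} are stated for $z\in L_{a_2}$, i.e.\ in terms of $\eta_2=\eta$ evaluated at points of $L_{a_2}$, so when pulling back I should phrase them with $w\in L_{a_2}$ and $\omega=\eta(w)$, using that $1-|w|$ and $1-|\eta(w)|$ are comparable (from \eqref{bounlune1} and Lemma~\ref{pom} applied the other way, or directly from the regularity of $\eta^{-1}$ near $1$ and the boundedness of $(\eta^{-1})'$).

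Second, having $h\in\ca(\bd)$, $|h(0)|=1$, satisfying the hypothesis \eqref{grow1} of Theorem~\ref{th03} with $q=0$, $\xi_0$ irrelevant, $\z_0=1$, I would invoke the already-proved case $q=0$ to conclude
\[
\sum_{\omega\in Z(h)}\frac{(1-|\omega|)^{p+1+\ep}}{|1-\omega|^{s}}\le C(p,r,\ep,a_2)\cdot K,\qquad s=\min(p,r).
\]
Then I would restrict the sum to $\omega\in\eta(L_{a_1})$, i.e.\ to zeros coming from $L_{a_1}$. Using \eqref{bounlune3} ($c_4\le(1-|\eta_2(z)|)/(1-|z|)\le c_5$ for $z\in L_{a_1}$) and again \eqref{bounlune2} to compare $|1-\eta(\z)|$ with $|1-\z|$ for $\z\in L_{a_1}$, each term $\frac{(1-|\eta(\z)|)^{p+1+\ep}}{|1-\eta(\z)|^{s}}$ is bounded below by a constant $c(a_1,a_2,p,r,\ep)$ times $\frac{(1-|\z|)^{p+1+\ep}}{|1-\z|^{s}}$. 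Since $Z(g)\cap L_{a_1}$ maps bijectively (counting multiplicities) onto $Z(h)\cap\eta(L_{a_1})$, summing these lower bounds and combining with the displayed estimate yields \eqref{loc2}.

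The only genuine obstacle is the bookkeeping at the corner $1$: one must make sure that \eqref{bounlune2} and \eqref{bounlune3} are applied on the correct side (to $\z\in L_{a_1}$, where $\eta$ is analytic across the boundary arc through $1$, so all the comparison constants are genuinely two-sided and depend only on $a_1,a_2$), and that the pull-back of the growth bound to $\bd$ uses only points of $L_{a_2}$ where \eqref{bounlune1}--\eqref{bounlune2} hold. Everything else — the normalization constant, the finitely many comparison constants $c_1,\dots,c_5$, and the change of variables in the sum — is routine. I would also remark that the case $p=0$ with $(1-|\z|)^{1+\ep}$ replaced by $(1-|\z|)$ follows identically, since the $q=0$ case of Theorem~\ref{th03} already incorporates that improvement.
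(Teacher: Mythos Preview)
Your approach is essentially identical to the paper's: transport to $\bd$ via $\eta_2$, use \eqref{bounlune1}--\eqref{bounlune2} to recover the growth hypothesis of Theorem~\ref{th03} with $q=0$, apply the Section~\ref{s12} result, then restrict to $L_{a_1}$ and invert via \eqref{bounlune2}--\eqref{bounlune3}. Your extra worry about needing the reverse inequality to \eqref{bounlune1} is unnecessary---for the growth transfer you only need $|1-w|\le c_2^{-1}|1-\eta_2(w)|$ and $1-|w|\ge c_1^{-1}(1-|\eta_2(w)|)$, both of which are exactly the stated directions---while your remark on normalization is a fair point that the paper leaves implicit (in every application $g$ is a restriction of a function with $|g(0)|=1$).
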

\begin{proof}
Recall that $\eta_2$ stands for the normalized conformal map from $L_{a_2}$ onto $\bd$. Put
$f:=g\circ \eta^{-1}_2$, so 
$$ \log|f(z)|\le K\,\frac{|1-\eta_2^{-1}(z)|^r}{(1-|\eta_2^{-1}(z)|)^p}\,, \qquad z\in\bd. $$
In view of \eqref{bounlune1}, \eqref{bounlune2} we have
$$
\log|f(z)|\le C K\,\frac{|1-z|^r}{(1-|z|)^p}\,, \qquad z\in\bd, \quad p,r\ge0.
$$
By the result obtained in Section \ref{s12}, for every $\ep>0$
$$
\sum_{v\in Z(f)} \frac{(1-|v|)^{p+1+\ep}}{|1-v|^{s}}=
\sum_{\z\in Z(g)} \frac{(1-|\eta_2(\z)|)^{p+1+\ep}}{|1-\eta_2(\z)|^{s}}\le C(p,r,\ep; a_2)\cdot K, \,\,\, s=\min(p,r),
$$
and moreover, for $0<a_1<a_2$
$$ \sum_{\z\in Z(g)\cap L_{a_1}} \frac{(1-|\eta_2(\z)|)^{p+1+\ep}}{|1-\eta_2(\z)|^{s}}\le C(p,r,\ep; a_1,a_2)\cdot K. $$
The result now follows from \eqref{bounlune2} and \eqref{bounlune3}.
\end{proof}

\smallskip
\nt
{\it Proof of Theorem \ref{th03}.}
Recall that, by convention, $\z_0=1$. To complete the proof of Theorem \ref{th03}, we note that
\eqref{grow1} implies \eqref{loc1} locally inside the domain $L_{a}$ with $4a=|1-\xi_0|$ and with $K$ replaced by 
$C(\zeta_0,\xi_0)\cdot K$.  Put  $\rho:=(q-1+\ep)_+$.
By Proposition~\ref{local},
\begin{equation}\label{loc21}
\begin{split}
\sum_{\z\in Z(f)\cap L_{a/2}}(1-|\z|)^{p+1+\ep}\,\frac{|\z-\xi_0|^{\rho}}{|\z-1|^{s}} &\le
2^{\rho}\sum_{\z\in Z(f)\cap L_{a/2}} \frac{(1-|\z|)^{p+1+\ep}}{|\z-1|^{s}} \\ &\le
C (p,q,r,\xi_0,\ep)\cdot K.
\end{split}
\end{equation}

On the other hand, condition \eqref{grow1} implies the global bound
\begin{equation*}
\log|f(z)|\le \frac{2^r K}{(1-|z|)^p\,|z-\xi_0|^q}\,,  \qquad z\in\bd,
\end{equation*}
and so
\begin{equation}\label{loc22}
\begin{split}
&{} \sum_{\z\in Z(f)\backslash L_{a/2}}(1-|\z|)^{p+1+\ep}
\frac{|\z-\xi_0|^{\rho}}{|\z-1|^{s}}\le C \sum_{\z\in Z(f)\backslash L_{a/2}}
(1-|\z|)^{p+1+\ep}|\z-\xi_0|^{\rho} \\ &\le C \sum_{\z\in Z(f)}
(1-|\z|)^{p+1+\ep}|\z-\xi_0|^{\rho} \le C(p,q,r,\xi_0,\ep) \cdot K.
\end{split}
\end{equation}
The latter inequality follows from Theorem A. The combination of \eqref{loc21} and \eqref{loc22} completes
the proof of Theorem \ref{th03}. \hfill $\Box$

\smallskip
\nt
{\it Proof of Theorem \ref{th02}}.
We follow the line of reasoning of the above proof. In view of \eqref{grow} one has the bound,
which holds inside the turned quadrangle
$$
L_a(\z_i)=\z_i\,L_a, \qquad a:=\frac12\,\min_{1\le j\le n} \dist(\z_j, E\backslash\{\z_j\}). $$
Precisely,
\begin{equation}\label{grow11}
\log|f(z)|\le C\, K\frac{|z-\z_i|^{r_i}}{(1-|z|)^p}\,, \qquad z\in L_a(\z_i), \quad i=1,2,\ldots,n.
\end{equation}
By Proposition \ref{local}, for $i=1,2,\ldots,n$ and $s_i=\min(p,r_i)$
\begin{equation}\label{btc3}
\sum_{\z\in Z(f)\cap L_{a/2}(\z_i)}
\frac{(1-|\z|)^{p+1+\ep}}{\prod_{j=1}^n |\z-\z_j|^{s_j}}\le C\cdot\sum_{\z\in Z(f)\cap L_{a/2}(\z_i)}
\frac{(1-|\z|)^{p+1+\ep}}{|\z-\z_i|^{s_i}}\le C\cdot K.
\end{equation}

On the other hand, if we ``ignore'' the product in the numerator of \eqref{grow}, we get the global bound
\begin{equation*}
\log|f(z)|\le\frac{K}{(1-|z|)^p\,\prod_{k=1}^m |z-\xi_k|^{q_k}}\,, \qquad z\in\bd,
\end{equation*}
and \cite[Theorem 0.2]{bgk1} gives
\begin{equation}\label{btc4}
\sum_{\z\in Z(f)}(1-|\z|)^{p+1+\ep}\,\prod_{k=1}^m |\z-\xi_k|^{(q_k-1+\ep)_+}\le C\cdot K.
\end{equation}
As above, the combination of \eqref{btc3} and \eqref{btc4} yields \eqref{btc2}, as claimed.\hfill $\Box$

\smallskip
\nt
{\it Proof of Theorem \ref{th04}}.
The argument is close to the one above. Within the domain $L_a(\z_i)$ with
$$
a:=\frac12\,\min_{1\le j\le n} \dist(\z_j, F\cup E\backslash\{\z_j\}), $$
the effect of the second factor in the denominator of  $\eqref{grow0}$ is negligible. Therefore, as above in \eqref{btc3},
we have  with $s=\min(p,r)$
\begin{equation}\label{btc13}
\sum_{\z\in Z(f)\cap L_{a/2}(\z_i)}
\frac{(1-|\z|)^{p+1+\ep}}{\dist^s(\z,E)}\le \sum_{\z\in Z(f)\cap L_{a/2}(\z_i)}
\frac{(1-|\z|)^{p+1+\ep}}{|\z-\z_i|^{s}}\le C\cdot K.
\end{equation}

The global bound now looks as
\begin{equation}\label{btc14}
\log|f(z)|\le \frac{K}{(1-|z|)^p\,\dist^q(z,F)}\,, \qquad z\in\bd,
\end{equation}
The Blaschke-type condition for $f$ in \eqref{btc14} with $p=0$ is a particular case of \cite[Theorem 3]{fg2}:
\begin{equation}\label{btc15}
\sum_{\z\in Z(f)} (1-|\z|)\, \dist^\rho(\z,F)\le C\cdot K, \qquad  \rho:=(q-\a(F)+\ep)_+.
\end{equation}
There is a standard way to carry the later result over to the case $p>0$, see the proof of
Theorem 0.2 in \cite{bgk1}. For the sake of completeness we outline the idea of this method.

Consider the sequence of functions
$$
f_n(z):=f(\l_n z), \qquad \l_n:=1-2^{-n}, \quad n\in\bn.
$$
By \eqref{btc14} and elementary inequality $\dist(z,F)\le 2\dist(\l_n z,F)$ we have
$$
\log |f_n(z)|\le \frac{2^{np+q}K}{\dist^q(z,F)}\,, \qquad z\in\bd.
$$
The latter is \eqref{btc14} with $p=0$, so, in view of \eqref{btc15},
\begin{equation}\label{btc16}
\sum_{j: |\z_j(f)|\le \l_{n-1}} (1-|\z_j(f_n)|)\, \dist^\rho(\z_j(f_n),F)\le C2^{np}\cdot K,
\end{equation}
where $\z_j(f)$, $\z_j(f_n)$ are the zeros of $f$ and $f_n$, respectively, so $\z_j(f)=\z_j(f_n)/\l_n$.

To obtain the lower bound of the LHS in \eqref{btc16}, we note that $|\z_j(f)|\le \l_{n-1}$ implies that
$$ 1-|\z_j(f_n)|=1-\frac{|\z_j(f)|}{\l_n}\ge\frac{1-|\z_j(f)|}2\,, \quad \dist(\z_j(f_n),F)\ge\frac12 \dist(\z_j(f),F), $$
and hence
$$ \sum_{\l_n<|\z_j(f)|\le \l_{n+1}}  (1-|\z_j(f)|)\, \dist^\rho(\z_j(f),F)\le C2^{np}\cdot K. $$
Since now
$$ 1-|\z_j(f)|<1-\l_n=2^{-n}, \qquad (1-|\z_j(f)|)^{p+\ep+1}<2^{-n(p+\ep)}, $$
the summation over $n$ leads to
\begin{equation}\label{btc17}
\sum_{\z\in Z(f)} {(1-|\z|)^{p+1+\ep}}\,\dist^{(q-\a(F)+\ep)_+}(\z,F)\le C\cdot K,
\end{equation}
which is the Blaschke-type condition for the functions $f$ in \eqref{btc14} with $p>0$.
Again, a combination of \eqref{btc13} and \eqref{btc17} gives \eqref{btc12}, as claimed. \hfill $\Box$

\section{Some further Blaschke-type conditions}\label{s16}

\subsection{Generalized Stolz domains}

There is a seemingly more general form of the Blaschke-type condition \eqref{btc1} which states that, under
assumption \eqref{grow0}, for every $0\le \tau'<\tau$ there is a positive constant $C=C(E,F,p,q,r,\tau,\tau')$ such that
\begin{equation}\label{btc18}
\sum_{\z\in Z(f)}
{(1-|\z|)^{p+1+\tau}}\,\frac{\dist^{(q-1+\tau)_+}(\z,F)}{\dist^{\min(p,r)+\tau'}(\z,E)}\le C\cdot K.
\end{equation}
In fact, it is a direct consequence of \eqref{btc1} with $\ep=\tau-\tau'$, since
$$ \frac{1-|\z|}{\dist(\z,E)}\le 1, \qquad \z\in\bd. $$
However, it turns out that in some instances \eqref{btc18} holds with $\tau'=\tau$, see Corollary \ref{genbtc}.

\medskip
Recall the notation $\css_A(\zeta_0), \ \zeta_0\in \bt, A>0$ introduced in \eqref{stang}.
In the proof of Theorem \ref{th03}, we actually obtained a little stronger conclusion than the claimed one.

\begin{proposition}\label{r03}
Let $f\in \ca(\bd)$ be a function satisfying the assumptions of Theorem \ref{th03}. Then for each $0\le\t'<\t$ and $\ep=\t-\t'>0$
there is a positive number $C_6=C_6(\zeta_0,\xi_0, p,q,r,\t,\t')$ such that the following condition holds:
\begin{eqnarray}\label{btc011}
&&\sum_{\z\in Z(f)\cap\css_{1/\ep}^c(\z_0)} \frac{(1-|\z|)^{p+\tau+1}  |\zeta-\xi_0|^{(q-1+\tau)_+}}{|\z-\z_0|^{\min(p,r)+\t'}}\\
&& \qquad \qquad +\sum_{\z\in Z(f)\cap\css_{1/\ep}(\z_0)} (1-|\z|)^{p+1+\ep} |\zeta-\xi_0|^{(q-1+\ep)_+} \le C_6\cdot K. \nonumber
\end{eqnarray}
\end{proposition}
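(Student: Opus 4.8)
The plan is to retrace the proof of Theorem~\ref{th03} given in Sections~\ref{s12}--\ref{s15}, but to keep track of the slightly stronger bounds that were in fact produced along the way, rather than immediately degrading them to the uniform exponent $\ep$. Recall that the proof splits $Z(f)$ into $Z^+(f)$ and $Z^-(f)$, and then $Z^+(f)$ into the part inside a fixed Stolz angle $\css_{1/\ep}(\z_0)$ and the part outside it. Away from $\xi_0$, the factor $|\z-\xi_0|^{(q-1+\tau)_+}$ is comparable to a constant, so — exactly as in the passage from the $q=0$ case to the $q>0$ case in Section~\ref{s15} — it suffices to establish the analogous two-sum inequality with the $\xi_0$-factor omitted, and then re-insert it via Proposition~\ref{local} for the part inside a lune $L_{a/2}$ and via Theorem~A for the part outside. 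So the real content is the $q=0$ statement, and I would prove that first.

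First I would revisit Step~3 of Section~\ref{s13} (the case $p\le r$). There, after \eqref{bound3}, one has for $k\ge k_0+1$ the estimate
\begin{equation*}
\sum_{\z\in Z_k\backslash Z_{k-1}} \frac{(1-|\z|)^{p+\ep+1}}{|1-\z|^{p}}\,|1-\z|^{\b_{k+1}}\le C(p,r)\,2^{-\ep k}\cdot K,
\end{equation*}
and the proof then throws away the favorable factor $|1-\z|^{\b_{k+1}}\ge|1-\z|^\ep$. The point of Proposition~\ref{r03} is that on $Z_k\backslash Z_{k-1}$ with $k\ge k_0+1$ we have $1-|\z|\le 2^{-k+1}|1-\z|$, so $(1-|\z|)^{\ep}\le 2^{(-k+1)\ep}|1-\z|^{\ep}$; hence in fact
\begin{equation*}
\sum_{\z\in Z_k\backslash Z_{k-1}} \frac{(1-|\z|)^{p+\ep+\ep'+1}}{|1-\z|^{p+\ep'}}\le C(p,r)\,2^{-\ep k}\,2^{(-k+1)\ep'}\cdot K,\qquad 0\le\ep'\le\ep,
\end{equation*}
where I used $|1-\z|^{\b_{k+1}}\ge|1-\z|^{\ep}$ once more and pushed an extra $\ep'$-power of $(1-|\z|)/|1-\z|$ to the numerator. (One must check $\b_{k+1}\ge\ep$ is not needed; what is needed is $\b_{k+1}\le \frac34\ep$ as in the paper, which still works.) Summing over $k\ge k_0+1$, and writing $\t'=p+\ep'-p=\ep'$ after relabeling, gives control of $\sum_{\z\in Z^+(f)\cap\css_{1/\ep}^c} (1-|\z|)^{p+1+\t}/|1-\z|^{p+\t'}$ with $\t=\ep+\ep'$... here I would instead simply set things up from the start with the two free parameters $\t,\t'$, $\ep=\t-\t'$, so that the exponent in the numerator is $p+1+\t$ and in the denominator $p+\t'=\min(p,r)+\t'$. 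For the part inside $\css_{1/\ep}(\z_0)$, I keep the bound \eqref{bound7} unchanged, i.e. with the uniform exponent $\ep$ and no negative power of $|1-\z|$ at all; this is why the second sum in \eqref{btc011} has exponent $p+1+\ep$ and no denominator. The case $p>r$ is handled by the same bookkeeping applied to Section~\ref{s14}, where $\g=p-r+1+\ep$ and the denominator exponent is $r+\t'=\min(p,r)+\t'$.

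The main obstacle, and the only place demanding care, is the interface between the Stolz-angle arguments (which naturally produce the split at $\css_{1/\ep}(\z_0)$) and the insertion of the $|\z-\xi_0|$-factor through the conformal transplantation of Proposition~\ref{local}: one has to make sure that the lune $L_{a/2}(\z_0)$ used there is contained in a Stolz angle $\css_{1/\ep}(\z_0)$ for $\ep$ small, or conversely reconcile the two decompositions. Concretely, for the zeros lying in $L_{a/2}(\z_0)$ one transplants by $\eta_2$, applies the already-proven $q=0$ version of \eqref{btc011} for $f=g\circ\eta_2^{-1}$ on the disk, and uses \eqref{bounlune1}--\eqref{bounlune3} to transfer back, noting that $\eta_2$ maps $L_{a/2}(\z_0)\cap\css_{1/\ep}^c(\z_0)$ into $\css_{1/\ep'}^c$ for a comparable $\ep'$; the zeros outside $L_{a/2}(\z_0)$ are bounded away from $\z_0$, and there the $|\z-\z_0|^{-\min(p,r)-\t'}$ factor is harmless, so \eqref{btc17} (equivalently Theorem~A with exponent $\t$ in place of $\ep$, which itself follows from \eqref{btc1} since $(1-|\z|)/\dist(\z,E)\le 1$) finishes it. Summing the $L_{a/2}$-contribution and the complementary contribution, and recalling the already-obtained control of $\ss^-$ over $Z^-(f)$, yields \eqref{btc011}. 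I expect no genuinely new estimate is required — every inequality needed is a marginal sharpening of one already displayed — but the indexing of the two nested decompositions must be written out carefully.
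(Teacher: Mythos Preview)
Your proposal is correct and matches the paper's own account, which offers no separate argument but simply remarks that \eqref{btc011} was ``actually obtained'' in the course of proving Theorem~\ref{th03}; your retracing of Sections~\ref{s12}--\ref{s15}, and in particular of \eqref{bound7.1}, is exactly what that remark points to.

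That said, you (and arguably the paper) are working harder than necessary: Proposition~\ref{r03} is an immediate consequence of Theorem~\ref{th03} itself, applied with parameter $\ep=\tau-\tau'$, by termwise domination. Writing $s=\min(p,r)$, one has for every $\z\in\bd$
\[
\frac{(1-|\z|)^{p+1+\tau}\,|\z-\xi_0|^{(q-1+\tau)_+}}{|\z-\z_0|^{s+\tau'}}
\;\le\; 2^{\tau'}\cdot\frac{(1-|\z|)^{p+1+\ep}\,|\z-\xi_0|^{(q-1+\ep)_+}}{|\z-\z_0|^{s}}
\]
(use $(1-|\z|)/|\z-\z_0|\le1$ and $|\z-\xi_0|\le2$, exactly as in the paper's own derivation of \eqref{btc18}), and
\[
(1-|\z|)^{p+1+\ep}\,|\z-\xi_0|^{(q-1+\ep)_+}
\;\le\; 2^{s}\cdot\frac{(1-|\z|)^{p+1+\ep}\,|\z-\xi_0|^{(q-1+\ep)_+}}{|\z-\z_0|^{s}}
\]
(use $|\z-\z_0|\le2$). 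Hence the entire left side of \eqref{btc011} is bounded by a constant times the left side of \eqref{btc11}, and none of the bookkeeping you anticipate --- reconciling the Stolz split with the lune split through $\eta_2$, adjusting $\ep$ to a ``comparable $\ep'$'' --- is actually needed. In this light the paper's phrase ``a little stronger conclusion'' is somewhat misleading: \eqref{btc011} is a repackaging that highlights the structure near $\z_0$, not a logical strengthening.
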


Obviously, inequality \eqref{btc011} reads as
\begin{equation*}
\sum_{\z\in Z(f)\cap\css_{1/\ep}^c(\z_0)} \frac{(1-|\z|)^{p+\tau+1}}{|\z-\z_0|^{\min(p,r)+\t'}}+
\sum_{\z\in Z(f)\cap\css_{1/\ep}(\z_0)} (1-|\z|)^{p+1+\ep}\le C_6\cdot K.
\end{equation*}
when $q=0$. Of course, the above remark also holds for Theorems \ref{th01}, \ref{th02}.

To get sharper results we could replace summation along the Stolz angles by that along larger approach domains.
For simplicity, we formulate here just the result for one point $\z_0=1$.

\begin{theorem}\label{th031}
Let $f\in \ca(\bd)$, $|f(0)|=1$, satisfy the growth condition
$$
\log |f(z)|\le \frac{K|1-z|^r}{(1-|z|)^p}, \qquad z\in\bd,
$$
where $0<p<r+1$.
Then for each $\tau>0$ there is a positive number $C_7=C_7(p,r,\tau)$ such that
\begin{equation}\label{btc111}
\sum_{\z\in Z(f),\,\frac{1-|\z|}{|1-\z|}>|1-\z|^{\beta}}
(1-|\z|)+\!\!
\sum_{\z\in Z(f),\,\frac{1-|\z|}{|1-\z|}\le |1-\z|^{\beta}}
\frac{(1-|\z|)^{p+1+\tau}}{|1-\z|^{\min(p,r)+1+\tau}}\le
C_7\cdot K,
\end{equation}
where
$$ \beta=\left\{
    \begin{array}{ll}
     1/(p+\tau) , & p\le r; \\
     (r+1-p)/(p+\tau), & r<p<r+1.
    \end{array}
  \right.
  $$
\end{theorem}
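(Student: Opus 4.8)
The plan is to adapt the machinery of Section~\ref{s12} and the proof of Theorem~\ref{th03}, replacing the dyadic chain of Stolz angles by a single ``curved'' approach domain whose boundary is governed by the curve $1-|\z|=|1-\z|^{1+\beta}$. The point is that the weight $(1-|\z|)^{p+1+\tau}/|1-\z|^{\min(p,r)+1+\tau}$ behaves like $(1-|\z|)$ precisely on the threshold curve $\frac{1-|\z|}{|1-\z|}\asymp|1-\z|^\beta$, so the two sums in \eqref{btc111} should match up at the boundary and the estimate should close without the loss of $\ep$ that appears in \eqref{btc11}.

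First I would set $\z_0=1$ and split $Z(f)$ according to whether $|1-\z|\ge 1/16$ or $|1-\z|<1/16$; the first part is controlled by \eqref{bgk1.02} exactly as in \eqref{sigma2}, so attention is on zeros near $1$. Next, I would reexamine Step~1 of Section~\ref{s13}: for $A=A_k=2^k$ we have the basic estimate \eqref{step1} (resp.\ \eqref{step111} in the case $p>r$), which after applying Lemma~\ref{twosa} gives, on $Z_k\setminus Z_{k-1}$ where $\frac{1-|\z|}{|1-\z|}\asymp 2^{-k}$,
\begin{equation*}
\sum_{\z\in Z_k\setminus Z_{k-1}} (1-|\z|)\,|1-\z|^{\g\b_{k+1}}\le C(p,r)\,2^{-k(p-r)}\cdot K
\end{equation*}
(with $\g=p-r+1$ when $p>r$, and the exponent $p-r$ replaced by $p$ in the case $p\le r$ with $\g=1$; I am absorbing the bounded factor $2^r$ into $C$). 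Using $2^{k}\b_k\asymp 1$ from \eqref{alpha} together with $|1-\z|\asymp (1-|\z|)\,2^{k}$ on this annular piece, one rewrites $|1-\z|^{\g\b_{k+1}}$ as a power of $(1-|\z|)$ times a bounded factor, and then the key observation is that the cutoff $\frac{1-|\z|}{|1-\z|}\le |1-\z|^\beta$ is, on $Z_k\setminus Z_{k-1}$, equivalent to $2^{-k}\lesssim (2^{-k}|1-\z|_{\mathrm{eff}})^{\beta}$, i.e.\ to $k$ being at least of order $\frac{1}{1+\beta}\log\frac1{|1-\z|}$. The value of $\beta$ in the statement is exactly the one making the geometric series in $k$ over the ``inner'' region converge to $C\cdot K$: summing $2^{-k(p-r)}$ (or $2^{-kp}$) against the extra power of $(1-|\z|)^{-(\min(p,r)+1+\tau)}(1-|\z|)^{p+1+\tau}\asymp (1-|\z|)^{r-p}$ collected from the weight, and matching exponents, forces $\beta=(r+1-p)/(p+\tau)$ when $r<p<r+1$ and $\beta=1/(p+\tau)$ when $p\le r$.

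Concretely, I would run two summations. For zeros with $\frac{1-|\z|}{|1-\z|}>|1-\z|^\beta$ (the ``boundary layer'' of the curved domain, which is comparable to a fixed Stolz angle near each scale), the weight is $\asymp (1-|\z|)$ and the bound is just \eqref{bgk1.02} localized — more precisely the analog of \eqref{bound7}/\eqref{bound1} at the single scale $k$ with $2^{-k}\asymp|1-\z|^\beta$, summed in $k$, which converges for this choice of $\beta$. For zeros with $\frac{1-|\z|}{|1-\z|}\le|1-\z|^\beta$, I would sum \eqref{bound31} (resp.\ its $p\le r$ analog \eqref{bound3}) over the relevant range of $k$, using that $\g\b_{k+1}\le\tau$ for $k$ large and that on $Z_k\setminus Z_{k-1}$ one has $(1-|\z|)^{p+1+\tau}/|1-\z|^{\min(p,r)+1+\tau}\asymp 2^{-k(\min(p,r)+1+\tau)}(1-|\z|)^{p-\min(p,r)}$; the resulting series in $k$ is geometric with ratio strictly less than $1$ precisely because of the definition of $\beta$, giving $\le C_7\cdot K$. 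Finally I would combine the two pieces with the trivial $|1-\z|\ge 1/16$ part.

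The main obstacle I anticipate is bookkeeping at the interface: one must check that the curve $\frac{1-|\z|}{|1-\z|}=|1-\z|^\beta$ really does make the two geometric series meet — i.e.\ that the exponent of $2^{-k}$ produced by the growth estimate (which is $p-r$ or $p$) exactly cancels against the exponent gained from trading $(1-|\z|)$-powers for $2^{-k}$-powers in the weight at the threshold, so that neither sum diverges and no spurious $\ep$-loss creeps back in. A secondary technical point is that these curved domains are no longer Stolz angles, so Lemma~\ref{dercm} is not directly available; I would handle this by still working scale-by-scale with the dyadic Stolz angles $\css_{2^k}$ (for which \eqref{step1} and \eqref{step111} are proved) and only at the end reorganizing the zeros into the two curved regions, rather than introducing a new conformal map. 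The case distinction $p\le r$ vs.\ $r<p<r+1$ is exactly the case distinction already made in Sections~\ref{s13}--\ref{s14}, so the two computations are parallel; the hypothesis $p<r+1$ is what guarantees $\beta>0$ in the second case.
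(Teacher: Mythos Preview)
Your plan to recycle the dyadic Stolz-angle estimates \eqref{bound1}/\eqref{step111} and then merely reorganize the zeros according to the curve $\tfrac{1-|\z|}{|1-\z|}=|1-\z|^\beta$ has a genuine gap: those estimates are fed only by the \emph{supremum} bound $\sup_{\css_{2^k}}\log|f|\le C\,2^{kp}K$, and this is too coarse for \eqref{btc111}. Take the case $p\le r$, $\beta=1/(p+\tau)$. On the outer shell $(Z_k\setminus Z_{k-1})\cap\{|1-\z|\gtrsim 2^{-k/\beta}\}$ the factor $|1-\z|^{\b_{k+1}}$ in \eqref{bound1} is bounded below by an absolute constant, so all you can extract is $\sum_{(Z_k\setminus Z_{k-1})\cap\text{outer}}(1-|\z|)\le C\,2^{kp}K$. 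Writing the target weight as $(1-|\z|)\cdot 2^{-k(p+\tau)}\cdot|1-\z|^{-1}\le C\,(1-|\z|)\,2^{k(1/\beta-p-\tau)}$ and summing over $k$ produces $\sum_k 2^{k(p+1/\beta-p-\tau)}K=\sum_k 2^{kp}K$, which diverges. The inner sum is worse: there $|1-\z|$ has no lower bound in terms of $k$, so the factor $|1-\z|^{\b_{k+1}}$ cannot be removed from \eqref{bound1} at all, and you again face $\sum_k 2^{kp}$. The geometric series you describe simply does not converge for this $\beta$; no bookkeeping at the interface rescues it.

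What the paper does is genuinely different. It passes to the right half-plane and replaces the family of Stolz sectors by a dyadic family of \emph{cusp} domains $\Omega_n=\{x>2^{-n}|y|^\lambda\}$ with $\lambda>1$, analyzing the conformal maps $\Omega_n\to\bc_r$ via Warschawski's asymptotics and harmonic-measure estimates. On $\partial\Omega_n$ the growth condition gives $\log|F|\le CK\,2^{np}|y|^{r-\lambda p}$, which is integrable exactly when $\lambda<(r+1)/p$ (this is where $p<r+1$ is used). Poisson--Jensen then yields $\sum_{x>2^{-n}|y|^\lambda}x\le CK\,2^{np}$: the \emph{same} right-hand side as the Stolz bound, but over a strictly larger set of zeros near the vertex. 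That extra reach---unavailable from Stolz angles---is precisely what makes both sums in \eqref{btc111} converge, with $\beta=\lambda-1$ determined by the choice of $\lambda$ and $\delta$ in the cusp argument rather than by any cancellation in the Stolz ledger.
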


\begin{corollary}\label{genbtc}
Under the same conditions,
$$
\sum_{\z\in Z(f)}
\frac{(1-|\z|)^{p+1+\tau}}{|1-\z|^{\min(p,r)+\tau}}\le
C_8\cdot K.
$$
\end{corollary}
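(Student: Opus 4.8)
The plan is to derive Corollary~\ref{genbtc} directly from Theorem~\ref{th031} by splitting the zero set according to the same dichotomy that appears there, namely into the ``Stolz-like'' part where $\frac{1-|\z|}{|1-\z|}>|1-\z|^\b$ and its complement where $\frac{1-|\z|}{|1-\z|}\le|1-\z|^\b$. On each part I would estimate the summand of the corollary by (a constant multiple of) the corresponding summand in \eqref{btc111}.

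On the set where $\frac{1-|\z|}{|1-\z|}>|1-\z|^\b$ we have $1-|\z|>|1-\z|^{1+\b}$, hence for the exponent $s:=\min(p,r)$,
$$
\frac{(1-|\z|)^{p+1+\t}}{|1-\z|^{s+\t}}
=(1-|\z|)\cdot\Bigl(\frac{1-|\z|}{|1-\z|}\Bigr)^{p+\t}\cdot|1-\z|^{(p-s)}
\le (1-|\z|)\cdot C,
$$
where we also use that $|1-\z|$ is bounded on $\bd$ and $p\ge s$; so these terms are dominated by $C\sum(1-|\z|)$, which is exactly the first sum in \eqref{btc111}. On the complementary set, since $|1-\z|\le 2$, we have $|1-\z|^{\min(p,r)+\t}\le C\,|1-\z|^{\min(p,r)+1+\t}$ if $|1-\z|\ge c>0$, but near $1$ we must instead simply note $|1-\z|^{-(s+\t)}\le C\,|1-\z|^{-(s+1+\t)}$ (again using boundedness of $|1-\z|$), so the summand is $\le C\,(1-|\z|)^{p+1+\t}/|1-\z|^{s+1+\t}$, which is the second sum in \eqref{btc111}. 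Adding the two estimates and invoking Theorem~\ref{th031} gives the claimed bound with $C_8=C_8(p,r,\t)$.

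The only subtlety — and the one point I would double-check rather than call routine — is the uniformity of the constants when $|1-\z|$ is comparable to its maximal value on $\bd$: there one trades a power of $|1-\z|$ up or down freely at the cost of a constant depending only on $p,r,\t$, which is harmless since we are allowed such dependence. I expect no real obstacle here: the corollary is a clean repackaging of Theorem~\ref{th031}, the point being that the piecewise bound \eqref{btc111}, which looks stronger near the vertex, actually implies the single homogeneous-looking bound of the corollary once one observes that on the ``good'' region the exponent $\min(p,r)+\t$ in the denominator is more than compensated by the smallness of $1-|\z|$ relative to $|1-\z|$.
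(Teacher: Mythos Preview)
Your proposal is correct and follows the same route as the paper: split $Z(f)$ according to the dichotomy in \eqref{btc111} and bound the summand of the corollary by the corresponding term there. The paper's write-up is slightly cleaner---on the first region it uses only $(1-|\z|)/|1-\z|\le 1$ and $1-|\z|\le 1$ (the $\beta$-condition you state is never actually needed), and on the second region the single observation $|1-\z|\le 2$ gives $|1-\z|^{-(s+\t)}\le 2\,|1-\z|^{-(s+1+\t)}$ everywhere, so no near/far case split is required.
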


\begin{proof} We use that $(1-|\z|)/(|1-\z|)\le 1$ and that $1-|\z|\le 1$.
Then
$$
\sum_{\z\in Z(f),\,\frac{1-|\z|}{|1-\z|}>|1-\z|^{\beta}}
\frac{(1-|\z|)^{p+1+\tau}}{|1-\z|^{\min(p,r)+\tau}}\le \sum_{\z\in Z(f),\,\frac{1-|\z|}{|1-\z|}>|1-\z|^{\beta}}
(1-|\z|).
$$
It remains to use \eqref{btc111}.
\end{proof}

\begin{proof}[Proof of Theorem \ref{th031}] Let $\psi(z)=\frac{1-z}{1+z}$, $F(z)=f(\psi(z))$. Then $F$ is analytic in the right half-plane $\bc_r$ and
\begin{equation}\label{do1}
\log |f(z)|\le C' K\cdot \frac{|z|^r}{x^p}, \qquad z\in \bc_r,\,|z|<C,
\end{equation}
where $z=x+iy$, $C>1$ is arbitrary and $C'$ depends on $p,r,C$.

Let $\lambda >1$ be fixed later on. Consider the domain
$$
\Omega_0=\{x+iy:x>|y|^\lambda \}.
$$
Let $\phi_0$ be a conformal map of $\Omega_0$ onto $\bc_r$ such that $\phi_0(\Omega_0\cap\mathbb R)=\bc_r\cap \mathbb R$.
To obtain good asymptotic information on $\phi_0$ at $0$, we use the results of Warschawski \cite{wa} (see also  \cite{po}, Theorem~11.16).
For some $C$ and $C'$ depending only on $\lambda $ we obtain
\begin{equation}\label{do2}
0<C<\frac{|\phi_0(z)|}{|z|}<C'<\infty, \qquad z\in \Omega_0,\,|z|<1.
\end{equation}
Furthermore, the same results show that given $\gamma\in(0,1]$ we have
\begin{equation}\label{do3}
0<C<\frac{\re \phi_0(x+iy)}{x}<C'<\infty, \qquad x>\max(\gamma|y|,2|y|^\lambda ),\,|x+iy|<1,
\end{equation}
with $C, C'$ depending only on $\gamma$ and $\lambda $.

Next, we need a similar estimate for
\begin{equation}\label{do4}
2|y|^\lambda \le x<\gamma|y|,\quad |x+iy|<\frac14.
\end{equation}
For $y\in[-1/4,1/4]$ we consider the points
\begin{align*}
&A=|y|^\lambda /2+i(y-\gamma|y|),\quad &B&=|y|^\lambda /2+i(y+\gamma|y|),\\
&A'=3|y|^\lambda /2+i(y-\gamma|y|),\quad &B'&=3|y|^\lambda /2+i(y+\gamma|y|),\\
&A''=2|y|+i(y-\gamma|y|),\quad &B''&=2|y|+i(y+\gamma|y|)
\end{align*}
and the rectangles $ABB''A''$, $A'B'B''A''$, see Figure \ref{f04}.
From now on we fix $\gamma=\gamma(\lambda )$ as the maximal number in $(0,1]$ such that
$$
A'B'B''A''\subset ABB''A''\cap \overline{\Omega_0}
$$
for all $y\in[-1/4,1/4]$.

\begin{figure}[htbp]
\includegraphics[width=9cm]{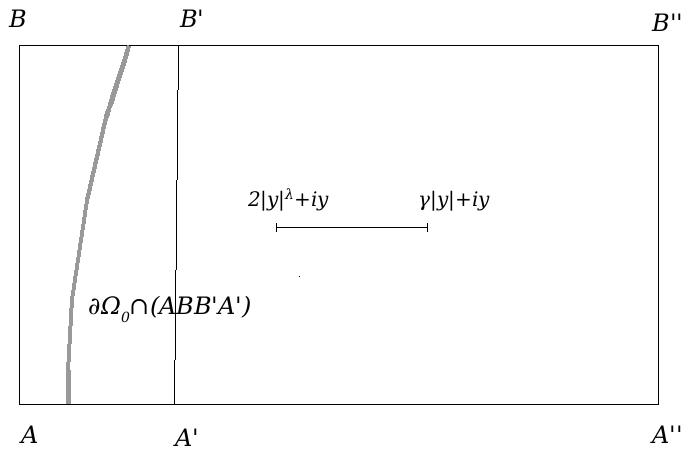}
\caption{Rectangles $ABB''A''$, $A'B'B''A''$, and the part of the boundary $\pt\Omega_0\cap (ABB'A')$.}
\label{f04}
\end{figure}

Fix $x+iy$ satisfying \eqref{do4}, the corresponding points $A,B,A',B',A'',B''$ and the rectangles $ABB''A''$, $A'B'B''A''$.
Then
$$
[A'',B'']\subset \{x+iy:x>\max(\gamma|y|,2|y|^\lambda ),\,|x+iy|<1\}.
$$

Set $u=\re \phi_0$. By \eqref{do2} and \eqref{do3} we have
\begin{gather*}
0\le u(w)\le C''|y|,\qquad w\in ABB''A''\cap \Omega_0,\\
C''|y|\le u(w),\qquad w\in [A'',B''],
\end{gather*}
with $C''$'s depending only on $\lambda $.

Since $0<x<|y|$, $\dist(x+iy,[A',B'])\ge x/4$, $\dist(x+iy,[A',A''])=\dist(x+iy,[B',B''])=\gamma|y|$, $\dist(x+iy,[A'',B''])\le 2|y|$,
an elementary estimate of harmonic measure shows that
$$
\omega(x+iy,[A'',B''],A'B'B''A'')\ge C''\cdot\frac{x}{|y|},
$$
with $C''$ depending only on $\lambda $.
Hence,
$$
u(x+iy)\ge C\cdot x.
$$
Since $0<x<|y|$, $\dist(x+iy,[A,B])\le x$, $\dist(x+iy,[A,A''])=\dist(x+iy,[B,B''])=\gamma|y|$, $\dist(x+iy,[A'',B''])\ge |y|$,
another elementary estimate of harmonic measure gives that
\begin{multline*}
\omega(x+iy,\partial(ABB''A''\cap \Omega_0)\setminus \partial\Omega_0, ABB''A''\cap \Omega_0)\\ \le
\omega(x+iy,\partial(ABB''A'')\setminus [A,B],ABB''A'')\le C''\cdot\frac{x}{|y|},
\end{multline*}
with $C''$ depending only on $\lambda $.
Hence,
$$
u(x+iy)\le C\cdot x.
$$
As a result, we obtain
\begin{equation}\label{do5}
0<C<\frac{\re \phi_0(x+iy)}{x}<C'<\infty, \qquad x\ge 2|y|^\lambda ,\,|x+iy|<1,
\end{equation}
with $C,C'$ depending only on $\lambda $.

Now, for $n\ge 1$ we define
$$
\Omega_n=\{x+iy:x>2^{-n}|y|^\lambda \},
$$
and $\phi_n:\Omega_n\mapsto \bc_r$,
$$
\phi_n(z)=2^{n/(\lambda -1)}\phi_0(2^{-n/(\lambda -1)}z).
$$
By \eqref{do2} and \eqref{do5}, for some $C,C'$ and for $n\ge 1$, $z\in \Omega_n$, $|z|<1$ we have
\begin{gather*}
0<C<\frac{|\phi_n(z)|}{|z|}<C'<\infty, \\
0<C<\frac{\re \phi_n(x+iy)}{x}<C'<\infty.
\end{gather*}

Next, we define
$$
G_n=F\circ \phi_n^{-1},\qquad n\ge 1.
$$
Then $|G_n(\phi_n(1))|=1$. Set $Q=\max(2|\phi_n(1)|, 1)$.
By \eqref{do1} we have
\begin{align*}
\log|G_n(iy)|&\le C K\cdot 2^{np}|y|^{r-\lambda p},\qquad y\in[-Q,Q],\\
\log|G_n(e^{i\theta}Q)|&\le C K\cdot 2^{np},\qquad \theta\in[-\pi/2,\pi/2],
\end{align*}
with $C$ depending only on $\lambda $.

From now on we suppose that $r-\lambda p>-1$. By the Poisson--Jensen formula in the right half-disk $\{z\in\mathbb C_r:|z|<Q\}$ we obtain that
$$
\sum_{G_n(x+iy)=0,\, |x+iy|<\frac12}x\le C K\cdot 2^{np}, \qquad n\ge 1,
$$
and hence,
$$
\sum_{F(x+iy)=0,\,x>2^{1-n}|y|^\lambda ,\, |x+iy|< C}x\le C' K\cdot 2^{np}, \qquad n\ge 1,
$$
with $C$ and  $C'$ depending only on $\lambda ,p,r$.
Theorem~\ref{th03} implies that
$$
\sum_{F(x+iy)=0,\,x>2^{1-n}|y|^\lambda ,\, C \le |x+iy|<1}x\le C' K\cdot 2^{np}, \qquad n\ge 1,
$$
with $C$ and $C'$ depending only on $\lambda ,p,r$.
Hence,
$$
\sum_{F(x+iy)=0,\,x>2^{1-n}|y|^\lambda ,\, |x+iy|<1}x\le C K\cdot 2^{np}, \qquad n\ge 1,
$$
with $C$ depending only on $\lambda ,p,r$.

Let $\delta>1$. Then
$$
\sum_{F(x+iy)=0,\,x>|y|^\lambda ,\, |x+iy|<1}x+
\sum_{F(x+iy)=0,\,x\le |y|^\lambda ,\, |x+iy|<1}\frac{x^{1+\delta p}}{|x+iy|^{\delta \lambda p}}
\le C K,
$$
with $C$ depending only on $\lambda ,p,r,\delta$.
If $p\le r$, then, given $\tau>0$, we can choose $\delta=1+\tau/p$, $\lambda =1+1/(p+\tau)$ to get
$$
\sum_{F(x+iy)=0,\,x>|y|^\lambda ,\, |x+iy|<1}x+
\sum_{F(x+iy)=0,\,x\le |y|^\lambda ,\, |x+iy|<1}\frac{x^{p+1+\tau}}{|x+iy|^{p+1+\tau}}
\le C K.
$$
If $r<p<r+1$, then, given $\tau>0$, we can choose $\delta=1+\tau/p$, $\lambda =(r+1+\tau)/(p+\tau)$ to get
$$
\sum_{F(x+iy)=0,\,x>|y|^\lambda ,\, |x+iy|<1}x+
\sum_{F(x+iy)=0,\,x\le |y|^\lambda ,\, |x+iy|<1} \frac{x^{p+1+\tau}}{|x+iy|^{r+1+\tau}}
\le C K.
$$
Returning to the zeros of $f$ and estimating those far from the point $1$ as in the proof of Theorem~\ref{th03} we obtain \eqref{btc111}.
\end{proof}

\subsection{Upper half-plane and plane with a cut}

A version of Theorem \ref{th02} for the upper half-plane looks as follows. We use a convenient shortening
\begin{equation*}
\{u\}_{c,\ep}:=(u_- -1+\ep)_+ -\min(c,u_+), \qquad c\ge0, \quad \ep>0, \quad u=u_+-u_-\in\br.
\end{equation*}

\begin{theorem}\label{upper}
Let $X=\{x_j\}_{j=1}^n$ and $X'=\{x'_k\}_{k=1}^m$ be two disjoint finite sets of distinct points
on the real line. Let $g\in \ca(\bc_+)$, $|g(i)|=1$, satisfy the growth condition
\begin{equation}\label{btc71}
\log|g(w)|\le
K\frac{(1+|w|)^{2b}}{(\im w)^a}\,\frac{\prod_{j=1}^n|w-x_j|^{c_j}}{\prod_{k=1}^m |w-x'_k|^{d_k}}, \quad w\in\bc_+,
\end{equation}
and $a,b,c_j,d_k\ge 0$. Denote
$$ l:=2a-2b-\sum_{j=1}^n c_j+\sum_{k=1}^m d_k=l_+ - l_-. $$
Then for each $\ep>0$ there exists a positive number $C_9=C_9(X,X',a,b,c_j,d_k,\ep)$ such that
the following Blaschke-type condition holds:
\begin{equation}\label{btc7}
\sum_{\zeta\in Z(g)} \frac{(\im \zeta)^{a+1+\ep}}{(1+|\zeta|)^{l_1}}\,
\frac{\prod_{k=1}^m|\zeta-x'_k|^{(d_k-1+\ep)_+}}{\prod_{j=1}^n |\zeta-x_j|^{\min(a,c_j)}}\le
C_9 \cdot K,
\end{equation}
where the parameter $l_1$ is defined by the relation
\begin{equation*}
l_1 :=2(a+1+\ep)+\{l\}_{a,\ep} -\sum_{j=1}^n \min(a,c_j)+\sum_{k=1}^m (d_k-1+\ep)_+.
\end{equation*}
\end{theorem}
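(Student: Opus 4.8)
The plan is to transport the problem to the unit disk via the Cayley transform and then invoke Theorem~\ref{th02}. Let $\o(z)=i\,\frac{1+z}{1-z}$ be the conformal map of $\bd$ onto $\bc_+$ with $\o(0)=i$, and set $f:=g\circ\o$, so $f\in\ca(\bd)$ and $|f(0)|=|g(i)|=1$. Put $\xi_j:=\o^{-1}(x_j)$ and $\xi'_k:=\o^{-1}(x'_k)$; these are distinct points of $\bt\setminus\{1\}$, with $\{\xi_j\}\cap\{\xi'_k\}=\emptyset$ because $X\cap X'=\emptyset$. First I would record the elementary identities
$$
\im\o(z)=\frac{1-|z|^2}{|1-z|^2},\qquad 1+|\o(z)|^2=\frac{2(1+|z|^2)}{|1-z|^2},\qquad \o(z)-\o(\xi)=\frac{2i(z-\xi)}{(1-z)(1-\xi)},
$$
whence, using $1\le1+|z|^2\le2$ on $\bd$ together with $|1-\xi_j|\asymp1$, $|1-\xi'_k|\asymp1$,
$$
\im\o(z)\asymp\frac{1-|z|}{|1-z|^2},\qquad 1+|\o(z)|\asymp\frac1{|1-z|},\qquad |\o(z)-x_j|\asymp\frac{|z-\xi_j|}{|1-z|},
$$
and likewise $|\o(z)-x'_k|\asymp|z-\xi'_k|/|1-z|$.

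Next I would substitute these into the hypothesis \eqref{btc71}. Since $(1+|w|)^{2b}\asymp(1+|w|^2)^b$ and $1-|z|^2\asymp1-|z|$, the hypothesis becomes
$$
\log|f(z)|\le C\,K\,\frac{|1-z|^{\,l}}{(1-|z|)^a}\,\frac{\prod_{j=1}^n|z-\xi_j|^{c_j}}{\prod_{k=1}^m|z-\xi'_k|^{d_k}},\qquad z\in\bd,
$$
where the power of $|1-z|$ collected from the four factors is exactly $l=2a-2b-\sum c_j+\sum d_k$. If $l\ge0$, I would regard $1$ as an additional point of $E$ with exponent $r=l=l_+$; if $l<0$, as an additional point of $F$ with exponent $q=-l=l_-$. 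In either case $1\notin\{\xi_j\}\cup\{\xi'_k\}$, so the enlarged sets are admissible for Theorem~\ref{th02}, which I then apply with $p=a$. Since the new factor produced by Theorem~\ref{th02} is $|\zeta-1|^{\min(a,l)}$ in the denominator when $l\ge0$ and $|\zeta-1|^{(l_--1+\ep)_+}$ in the numerator when $l<0$, the definition of $\{\cdot\}_{a,\ep}$ merges the two cases into
$$
\sum_{\zeta\in Z(f)}(1-|\zeta|)^{a+1+\ep}\,|\zeta-1|^{\{l\}_{a,\ep}}\,\frac{\prod_{k=1}^m|\zeta-\xi'_k|^{(d_k-1+\ep)_+}}{\prod_{j=1}^n|\zeta-\xi_j|^{\min(a,c_j)}}\le C\,K.
$$

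Finally I would push this back along $Z(g)=\o(Z(f))$: writing $\zeta=\o^{-1}(\eta)$ for $\eta\in Z(g)$ and using the reciprocal estimates
$$
1-|\zeta|\asymp\frac{\im\eta}{(1+|\eta|)^2},\qquad |1-\zeta|\asymp\frac1{1+|\eta|},\qquad |\zeta-\xi_j|\asymp\frac{|\eta-x_j|}{1+|\eta|},\qquad |\zeta-\xi'_k|\asymp\frac{|\eta-x'_k|}{1+|\eta|},
$$
every summand turns into
$$
\frac{(\im\eta)^{a+1+\ep}}{(1+|\eta|)^{l_1}}\,\frac{\prod_{k=1}^m|\eta-x'_k|^{(d_k-1+\ep)_+}}{\prod_{j=1}^n|\eta-x_j|^{\min(a,c_j)}},
$$
and collecting the powers of $(1+|\eta|)$ gives precisely $l_1=2(a+1+\ep)+\{l\}_{a,\ep}-\sum_j\min(a,c_j)+\sum_k(d_k-1+\ep)_+$. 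This is \eqref{btc7}, with $C_9$ absorbing the constant of Theorem~\ref{th02} together with the comparison constants, all of which depend only on $X,X',a,b,c_j,d_k,\ep$. I do not expect a genuine obstacle: the whole argument is a change of variables, and the only delicate point is the case split on the sign of $l$ combined with the verification that the change of variables reproduces exactly the exponent $l_1$ of the statement --- which is precisely what the shorthand $\{u\}_{c,\ep}$ was introduced to package.
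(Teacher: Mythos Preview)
Your proposal is correct and follows essentially the same route as the paper: transfer to the disk via the Cayley map $w=i\frac{1+z}{1-z}$, use the elementary two-sided estimates relating $1-|z|$, $|1-z|$, $|z-\xi_j|$, $|z-\xi'_k|$ to $\im w$, $1+|w|$, $|w-x_j|$, $|w-x'_k|$, apply Theorem~\ref{th02} (treating the point $1\in\bt$ as belonging to $E$ or $F$ according to the sign of $l$), and then pull the resulting Blaschke-type sum back to $\bc_+$. The paper only sketches this; your write-up fills in exactly the details the sketch omits, including the bookkeeping that produces the exponent $l_1$ and the case split packaged by $\{l\}_{a,\ep}$.
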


\begin{proof} Since the result follows directly from Theorem \ref{th02}, we give only a sketch of the proof.
Consider the standard conformal  mappings
\begin{equation}\label{confma}
z=z(w)=\frac{w-i}{w+i} :\bc_+\to \bd, \quad w=w(z)=i\, \frac{1+z}{1-z}:\bd\to\bc_+,
\end{equation}
and the following elementary relations between the corresponding
quantities in $\bc_+$ and $\bd$:
$$
\frac2{1+|w|}\le |1-z|\le \frac{2\sqrt{2}}{1+|w|}\,, \qquad \frac{2\,\im w}{(1+|w|)^2}
\le 1-|z|\le\frac{8\,\im w}{(1+|w|)^2}.
$$
We have
$$ |w-x_j|=\frac{2|z-\z_j|}{|1-z||1-\z_j|}\,, \quad
\frac{2|w-x_j|}{(1+|w|)|x_j+i|}\le |z-\z_j|\le\frac{2\sqrt{2}\,|w-x_j|}{(1+|w|)|x_j+i|} $$
with $\z_j=z(x_j)$. Similar inequalities hold for $|w-x'_k|$ and $|z-z(x'_k)|$. Then, we map $\bc_+$ onto
$\bd$ using $w(z)$ defined in \eqref{confma}, and rewrite inequality \eqref{btc71} in terms of $z\in \bd$.
To complete, we apply Theorem \ref{th02} and go back to $\bc_+$ using $z(w)$ defined in \eqref{confma}.
\end{proof}

In view of applications to the spectral theory we give yet another version of Theorem \ref{th02}
related to the domain $\bc\backslash\br_+$.

\begin{theorem}\label{cut}
Let $T=\{t_j\}_{j=1}^n$ and $T'=\{t'_k\}_{k=1}^m$ be two disjoint
finite sets of distinct positive numbers. Let $h\in \ca(\bc\backslash\br_+)$, $|h(-1)|=1$,
satisfy the growth condition
\begin{equation*}
\log|h(\l)|\le \frac{K}{|\l|^r}\,\frac{(1+|\l|)^{b}}{\dist^a(\l,\br_+)}\,\frac{\prod_{j=1}^n |\l-t_j|^{c_j}}
{\prod_{k=1}^m |\l-t'_k|^{d_k}}, \quad \l\in\bc\bsl\br_+,
\end{equation*}
and  $a,b,c_j,d_k\ge0$, $r\in\br$. Denote
$$ s:=3a-2b+2r-2\sum_{j=1}^n c_j+2\sum_{k=1}^m d_k=s_+ - s_-. $$
Then for each $\ep>0$ there is a positive number $C$ which depends on all parameters
involved such that the following inequality holds:
\begin{equation}\label{btc08}
\sum_{\z\in Z(h)} \dist^{a+1+\ep}(\z,\br_+)\,\frac{|\z|^{s_1}}{(1+|\z|)^{s_2}}\cdot
\frac{\prod_{k=1}^m |\z-t'_k|^{(d_k-1+\ep)_+}}{\prod_{j=1}^n |\z-t_j|^{\min(a,c_j)}}
\le C\cdot K,
\end{equation}
where the parameter $s_1$, $s_2$ are defined by the relations
\begin{equation*}
\begin{split}
s_1 &:= \frac{\{-2r-a\}_{a,\ep} -a-1-\ep}2\,, \\
s_2 &:= a+1+\ep + \frac{\{-2r-a\}_{a,\ep}+\{s\}_{a,\ep}}2 -\sum_{j=1}^n \min(a,c_j)+\sum_{k=1}^m (d_k-1+\ep)_+ \,.
\end{split}
\end{equation*}

\end{theorem}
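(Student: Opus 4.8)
The plan is to deduce Theorem~\ref{cut} from Theorem~\ref{th02} by a double conformal transplantation. Let $\sqrt{\cdot}$ be the branch of the square root on $\bc\setminus\br_+$ with values in the upper half–plane, and let $\Phi\colon\bc\setminus\br_+\to\bd$, $\Phi(\l)=\frac{\sqrt\l-i}{\sqrt\l+i}$, be the resulting conformal bijection; its inverse is $\l=\Phi^{-1}(z)=-\bigl(\frac{1+z}{1-z}\bigr)^2$, and $\Phi(0)=-1$, $\Phi(\infty)=1$. I set $f:=h\circ\Phi^{-1}\in\ca(\bd)$, so that $|f(0)|=|h(-1)|=1$. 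First I would record the elementary dictionary between $\l\in\bc\setminus\br_+$ and $z=\Phi(\l)$, writing $w:=\sqrt\l=i\frac{1+z}{1-z}$: one has $|\l|=|w|^2$, $\dist(\l,\br_+)\asymp|w|\,\im w$ (checked by putting $w=\rho e^{i\varphi}$, $0<\varphi<\pi$, and comparing $\rho^2|\sin2\varphi|$, resp.\ $\rho^2$, with $\rho^2\sin\varphi$), and $|\l-t_j|=|w-\sqrt{t_j}|\,|w+\sqrt{t_j}|\asymp|w-\sqrt{t_j}|\,(1+|w|)$ (likewise for the $t'_k$), together with $|w|=\frac{|1+z|}{|1-z|}$, $\im w=\frac{1-|z|^2}{|1-z|^2}$, $1+|w|\asymp|1-z|^{-1}$ and $|w-\sqrt{t_j}|\asymp\frac{|z-\xi_j|}{|1-z|}$, where $\xi_j:=\Phi(t_j)$ and $\xi'_k:=\Phi(t'_k)$ lie on $\bt$. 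All implied constants depend only on the finite data, and $1,-1,\xi_j,\xi'_k$ are pairwise distinct points of $\bt$.

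Substituting this dictionary into the growth hypothesis on $h$ turns each factor into a product of powers of $1-|z|$, $|1-z|$, $|1+z|$, $|z-\xi_j|$ and $|z-\xi'_k|$; collecting exponents — the exponent of $|1-z|$ coming out to $3a-2b+2r-2\sum_j c_j+2\sum_k d_k=s$ — yields
\[
\log|f(z)|\le C\,K\,\frac{|1-z|^{\,s}}{(1-|z|)^{a}\,|1+z|^{\,2r+a}}\,\frac{\prod_j|z-\xi_j|^{c_j}}{\prod_k|z-\xi'_k|^{d_k}},\qquad z\in\bd .
\]
I would read this as a bound of the form \eqref{grow} with radial exponent $p=a$, in which the point $1$ (image of $\l=\infty$) enters with exponent $s$ and the point $-1$ (image of the tip $\l=0$) with exponent $-(2r+a)$; according to the signs of $s$ and of $2r+a$, each of $1,-1$ is then listed among the zero–type points $\{\xi_j\}$ (when its exponent is $\ge0$) or among the pole–type points $\{\xi'_k\}$ (when it is $<0$), which is admissible because the relevant exponent $|s|$, resp.\ $|2r+a|$, is then nonnegative. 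Applying Theorem~\ref{th02} to $f$ and noting that, in either sign case, the factor it produces at $z_0=1$ is $|z_0-1|^{(|s|-1+\ep)_+}$ or $|z_0-1|^{-\min(a,|s|)}$, i.e.\ exactly $|z_0-1|^{\{s\}_{a,\ep}}$ by the very definition of $\{\,\cdot\,\}_{a,\ep}$ (and similarly at $z_0=-1$, giving $|z_0+1|^{\{-2r-a\}_{a,\ep}}$), I obtain
\[
\sum_{z_0\in Z(f)}(1-|z_0|)^{a+1+\ep}\,|z_0-1|^{\{s\}_{a,\ep}}\,|z_0+1|^{\{-2r-a\}_{a,\ep}}\,\frac{\prod_k|z_0-\xi'_k|^{(d_k-1+\ep)_+}}{\prod_j|z_0-\xi_j|^{\min(a,c_j)}}\le C\cdot K .
\]

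Finally I would transplant this back to $Z(h)$ via $z_0=\Phi(\zeta)$, $\zeta\in Z(h)$. From the dictionary, $1-|z_0|\asymp\frac{\dist(\zeta,\br_+)}{|\zeta|^{1/2}(1+|\zeta|)}$, $|z_0-1|\asymp(1+|\zeta|)^{-1/2}$, $|z_0+1|\asymp|\zeta|^{1/2}(1+|\zeta|)^{-1/2}$ and $|z_0-\xi_j|\asymp\frac{|\zeta-t_j|}{1+|\zeta|}$ (likewise for $\xi'_k$); inserting these, each summand becomes, up to a fixed constant, $\dist^{a+1+\ep}(\zeta,\br_+)\,\dfrac{|\zeta|^{s_1}}{(1+|\zeta|)^{s_2}}\,\dfrac{\prod_k|\zeta-t'_k|^{(d_k-1+\ep)_+}}{\prod_j|\zeta-t_j|^{\min(a,c_j)}}$ with precisely the $s_1,s_2$ of the statement: in $s_1$ the term $-\tfrac12(a+1+\ep)$ comes from $(\im w_0)^{a+1+\ep}$ via $|\l|=|w|^2$ and the term $\tfrac12\{-2r-a\}_{a,\ep}$ from the point $-1$, while $s_2$ assembles the $(1+|\zeta|)$–powers released by all these conversions. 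Summing over $\zeta$ gives \eqref{btc08}.

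I do not expect a genuine obstacle: the argument is a routine double transplantation, and the only thing needing care is the exponent bookkeeping. In particular, the tip of the cut $\l=0$ becomes an ordinary boundary point of $\bd$ whose effect flips between ``zero–type'' and ``pole–type'' according to the sign of $2r+a$ — and $\l=\infty$ likewise according to the sign of $s$ — which is exactly what the operation $\{\,\cdot\,\}_{a,\ep}$ packages, and is why $\{-2r-a\}_{a,\ep}$ and $\{s\}_{a,\ep}$ occur in $s_1,s_2$; the factors $\tfrac12$ there merely reflect $|\l|=|w|^2$ and $|\l-t_j|\asymp|w-\sqrt{t_j}|\,(1+|w|)$.
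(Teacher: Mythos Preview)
Your route is exactly the paper's: the paper passes from $\bc\setminus\br_+$ to $\bc_+$ via $g(w)=h(w^2)$, invokes Theorem~\ref{upper}, and Theorem~\ref{upper} in turn is just Theorem~\ref{th02} pulled back by the Cayley map. You compose the two steps into the single map $\Phi$. So the strategy is fine, but your dictionary contains a real error.

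The claim $|\l-t_j|=|w-\sqrt{t_j}|\,|w+\sqrt{t_j}|\asymp|w-\sqrt{t_j}|\,(1+|w|)$ is false: it would force $|w+\sqrt{t_j}|\asymp 1+|w|$ on $\bc_+$, but $w$ can approach the boundary point $-\sqrt{t_j}$ (take $w=-\sqrt{t_j}+i\delta$), and then $|w+\sqrt{t_j}|\to0$ while $1+|w|$ stays bounded away from $0$. Geometrically, the slit $\br_+$ has two sides, so each $t_j$ (and each $t'_k$) corresponds to \emph{two} points of $\bt$ under $\Phi$, namely $\xi_j=\frac{\sqrt{t_j}-i}{\sqrt{t_j}+i}$ and $\bar\xi_j=\frac{-\sqrt{t_j}-i}{-\sqrt{t_j}+i}$. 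Your bound on $\log|f(z)|$ therefore omits the factors $|z-\bar\xi_j|^{c_j}$ in the numerator and, more damagingly, $|z-\bar\xi'_k|^{d_k}$ in the denominator; near $z=\bar\xi'_k$ the true bound on $\log|f|$ blows up, whereas yours does not, so the growth hypothesis you feed into Theorem~\ref{th02} is not actually satisfied by $f$.

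The fix is immediate: keep both points $\xi_j,\bar\xi_j$ (and $\xi'_k,\bar\xi'_k$) when you apply Theorem~\ref{th02}. In the back-transplantation you then use the honest identity
\[
|z_0-\xi_j|\,|z_0-\bar\xi_j|\asymp\frac{|w_0-\sqrt{t_j}|\,|w_0+\sqrt{t_j}|}{(1+|w_0|)^{2}}=\frac{|\zeta-t_j|}{(1+|w_0|)^{2}}\asymp\frac{|\zeta-t_j|}{1+|\zeta|},
\]
and the exponent bookkeeping gives exactly the same $s_1,s_2$ as in the statement. (In fact your two uses of the false asymptotic cancel each other in the final exponents, which is why your $s_1,s_2$ came out right; but the intermediate Blaschke inequality you wrote for $Z(f)$ is not justified.)
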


The result is a direct consequence of Theorem \ref{upper} applied to the function $g(w):=h(w^2)$, $w\in\bc_+$,
and the elementary inequalities
$$ |w|\,\im w\le \dist(w^2,\br_+)\le 2|w|\,\im w, \qquad w\in\bc_+. $$

\if{
\section{Analytic Fredholm alternative}\label{s17}

The goal of this section is to refine partially (for a certain range of the parameters) a
recent result of R. Frank \cite[Theorem 3.1]{fra15} on some quantitative aspects of the analytic Fredholm alternative.
Precisely, the problem concerns the distribution of eigenvalues of finite type of an
operator-valued function $W(\cdot)=I+T(\cdot)$, analytic on a domain $\oo$ of the complex plane.
We always assume that $T\in\Sc_\infty$, the set of compact operators on the Hilbert space.
A  number $\l_0\in\oo$ is called an {\it eigenvalue of finite type of} $W$ if $\ker W(\l_0)\not=\{0\}$,
(that is, $-1$ is an eigenvalue of $T(\l_0)$), if $W(\l_0)$ is Fredholm (that is, both
$\dim\ker W(\l_0)$ and ${\rm codim\ ran}\, W(\l_0)$ are finite), and if $W$ is invertible in some
punctured neighborhood of $\l_0$. As is known, see, e.g., \cite[Theorem XI.8.1]{ggk}, the function $W$ admits the following
expansion at any eigenvalue of finite type
$$ W(\l)=E(\l)(P_0+(\l-\l_0)^{k_1}\,P_1+\ldots+(\l-\l_0)^{k_l}\,P_l)G(\l), $$
where $P_1,\ldots,P_l$ are mutually disjoint projections of rank one, $P_0=I-P_1-\ldots-P_l$,
$k_1\le\ldots\le k_l$ are positive integers, and $E$, $G$ are analytic operator-valued functions,
defined and invertible in some neighborhood of $\l_0$. The number
$$ \nu(\l_0,W):=k_1+\ldots+k_l $$
is usually referred to as an {\it algebraic multiplicity of the eigenvalue} $\l_0$.

The following result (a part of Theorem 3.1), is a cornerstone of the paper \cite{fra15}.

{\bf Theorem F}. Let $T(\cdot)$ be an analytic operator-valued function on the domain
$\oo=\bc\backslash\br_+$, so that $T\in\Sc_p$, $p\ge 1$, the set of the Schatten--von Neumann operators
of order $p$. Assume that for all $\l\in\bc\backslash\br_+$
\begin{equation}\label{eq1}
\|T(\l)\|_p\le \frac{M}{\dist^\rho(\l,\br_+)\,|\l|^\s}\,, \qquad  \rho>0, \ \ \s\in\br, \ \ \rho+\s>0.
\end{equation}
Let $\{\l_j\}$ be the eigenvalues of $W$ of finite type, repeated according to their algebraic multiplicity.
Denote $q:=(p\rho+2p\s-1+\ep)_+$. Then for all $\ep>0$
\begin{equation}\label{fr}
\sum_{|\l_j|\le M^{1/\rho+\s}}\dist^{p\rho+1+\ep}(\l_j,\br_+)\, |\l_j|^{\frac{q-p\rho-1-\ep}2} \le
C\,M^{\frac{q+p\rho+1+\ep}{2(\rho+\s)}}\,.
\end{equation}
Here $C$ is a generic positive constant which depends on $p, \rho, \s, \ep$.

The similar results for the eigenvalues outside the disk of an appropriate radius and for
$\rho=0$ are also available.

The proof of this result is based on the identification of the eigenvalues of finite type of $W$ with
the zeros of certain {\it scalar} analytic functions, known as the regularized determinants
$$ f(\l):={\rm det}_{p}(I+T(\l)), $$
see, e.g., \cite{Si05} and \cite{gokr-nsa} for their definition and basic properties. The point is that the set of eigenvalues
of finite type of $W$ agrees with the zero set of $f$, and moreover, $\nu(\l_0,W)=\mu_f(\l_0)$, the multiplicity of zero of
$f$ at $\l_0$ (see \cite[Lemma 3.2]{fra15} for the rigorous proof). Thereby, the problem is reduced to the study of the zero
distributions of certain analytic functions, the latter being a classical topic of complex analysis going back to Jensen and Blaschke.

A key ingredient of the author's proof is \cite[Theorem 0.2]{bgk1}. We are aimed at proving the result, which refines Theorem F
for certain range of the parameters, by using Theorem \ref{cut} instead.

\begin{theorem}
Let $T(\cdot)$ be an analytic operator-valued function on the domain $\oo=\bc\backslash\br_+$,
which satisfies the hypothesis of Theorem F. Assume also that
\begin{equation}\label{range}
-\rho<\s\le-\frac{\rho}2\,.
\end{equation}
Let $\{\l_j\}$ be the eigenvalues of $W$ of finite type, repeated according to their algebraic multiplicity.
Then for all $0<\ep<1$
\begin{equation}\label{bgk}
\sum_{|\l_j|\le M^{1/\rho+\s}}  \dist^{p\rho+1+\ep}(\l_j,\br_+)\, |\l_j|^{p\s-\frac{1+\ep}2}\le
C M^{p\s+p\rho+\frac{1+\ep}2}.
\end{equation}
\end{theorem}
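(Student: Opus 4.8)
The plan is to reduce the statement to Theorem~\ref{cut} applied to the regularized determinant of $W=I+T$, following the scheme of \cite{fra15} but with Theorem~\ref{cut} in place of \cite[Theorem~0.2]{bgk1}. By \cite[Lemma~3.2]{fra15}, the eigenvalues of finite type of $W$, counted with their algebraic multiplicities, coincide with the zeros, counted with multiplicity, of the scalar function $f(\l):={\rm det}_p(I+T(\l))$, analytic on $\oo=\bc\bsl\br_+$; so it suffices to estimate the corresponding sum over $Z(f)$. The standard inequality $|{\rm det}_p(I+A)|\le\exp(\gga_p\|A\|_p^p)$ (see \cite{Si05,gokr-nsa}) together with \eqref{eq1} gives
\[
\log|f(\l)|\le \frac{\gga_p\,M^p}{\dist^{p\rho}(\l,\br_+)\,|\l|^{p\s}}\,,\qquad \l\in\bc\bsl\br_+,
\]
with an absolute constant $\gga_p$.

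Next I would normalize and rescale as in \cite{fra15}. Since $\rho+\s>0$, the norm $\|T(\l)\|_p$ is small far out on the negative semi-axis: for a suitable absolute $\eta_p\in(0,1)$ and $R:=(M/\eta_p)^{1/(\rho+\s)}\asymp M^{1/(\rho+\s)}$ one has $\|T(-R)\|_p\le\eta_p$, whence $|f(-R)|\ge\tfrac12$ by continuity of ${\rm det}_p$ at the identity. Put $h(\l):=f(R\l)/f(-R)$, so $h\in\ca(\bc\bsl\br_+)$ and $|h(-1)|=1$. Using $\dist(R\l,\br_+)=R\,\dist(\l,\br_+)$, $|R\l|=R|\l|$ and $M^pR^{-p(\rho+\s)}=\eta_p^p$, the function $h$ satisfies, up to the bounded additive term $|\log|f(-R)||$, the growth hypothesis of Theorem~\ref{cut} with $n=m=0$, $a=p\rho$, $b=0$, $r=p\s$ and an absolute constant in the role of $K$. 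That additive term is harmless: it only enlarges the absolute constant, and it is relevant only where the right-hand growth function exceeds $1$ — which includes the region $\{|\z|\le1\}$ where all the zeros of $h$ we retain lie, since $\dist(\z,\br_+)\le|\z|$ and $\rho+\s>0$ force the growth function to be $\ge|\z|^{-p(\rho+\s)}\ge1$ there.

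Now I apply Theorem~\ref{cut}. With these parameters $s=3a+2r=3p\rho+2p\s$, and the range assumption \eqref{range} is precisely what makes $0\le -2r-a=-2p\s-p\rho<a$ and $s>a$; hence $\{-2r-a\}_{a,\ep}=p\rho+2p\s$, $\{s\}_{a,\ep}=-p\rho$, so that $s_1=p\s-\tfrac{1+\ep}2$ and $s_2=p\rho+p\s+1+\ep$. Then \eqref{btc08} reads
\[
\sum_{\z\in Z(h)}\dist^{p\rho+1+\ep}(\z,\br_+)\,\frac{|\z|^{\,p\s-\frac{1+\ep}2}}{(1+|\z|)^{\,p\rho+p\s+1+\ep}}\le C.
\]
Undoing the dilation $\z=\l_j/R$ and keeping only the zeros with $|\l_j|\le M^{1/(\rho+\s)}\asymp R$ — for which $1+|\z|\asymp1$, so $(1+|\z|)^{-s_2}$ drops out up to a constant — the homogeneity of $\dist(\cdot,\br_+)$ and of $|\cdot|$ produces the factor $R^{\,p\rho+1+\ep+s_1}=R^{\,p\rho+p\s+\frac{1+\ep}2}$; inserting $R\asymp M^{1/(\rho+\s)}$ gives \eqref{bgk}, the zeros with $|\l_j|>M^{1/(\rho+\s)}$ not entering that sum.

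The step I expect to be the main obstacle is the normalization: producing, from the growth bound for $f$, a growth bound for a function normalized at a single fixed point, of the precise multiplicative form required by Theorem~\ref{cut} and with a constant depending only on $p,\rho,\s$, and then tracking the powers of $M$ carefully through the dilation $\l\mapsto R\l$. Everything else is a direct substitution into Theorem~\ref{cut} together with the elementary evaluation of $s_1$ and $s_2$ under \eqref{range}.
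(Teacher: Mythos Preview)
Your overall scheme matches the paper's: rescale, normalize at a point on the negative axis, apply Theorem~\ref{cut}, then undo the scaling. Your computation of $s_1$ and $s_2$ is correct. The gap is exactly the step you flagged as the obstacle, and the way you dispose of it does not work.

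Theorem~\ref{cut} is a global statement: the growth bound must hold for \emph{all} $\l\in\bc\backslash\br_+$, not just where the zeros you want to keep happen to lie. With $b=0$ your majorant is $G(\l)=\dist^{-p\rho}(\l,\br_+)\,|\l|^{-p\s}$, and along the negative axis $G(-t)=t^{-p(\rho+\s)}\to 0$ as $t\to\infty$ (since $\rho+\s>0$). Hence the bounded additive term $|\log|f(-R)||$ is \emph{not} dominated by $K\cdot G(\l)$ globally, and you are not entitled to invoke Theorem~\ref{cut} with $b=0$. The sentence ``it is relevant only where the right-hand growth function exceeds $1$'' is the error: the hypothesis of Theorem~\ref{cut} is not local to the zeros you intend to sum over.

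The paper's remedy is simple and costs nothing: absorb the additive constant by taking $b=p(\rho+\s)$. Since $\dist(\l,\br_+)\le|\l|$ and $\rho+\s>0$, one has
\[
\frac{1}{\dist^{p\rho}(\l,\br_+)\,|\l|^{p\s}}+1\ \le\ \frac{(1+|\l|)^{p(\rho+\s)}}{\dist^{p\rho}(\l,\br_+)\,|\l|^{p\s}}
\]
on all of $\bc\backslash\br_+$, so the normalization constant is swallowed globally. With this $b$ the parameter $s$ becomes $3a-2b+2r=p\rho=a$, which still gives $\{s\}_{a,\ep}=-a$; thus $s_1$ and $s_2$ are exactly the values you computed, and the rest of your argument goes through unchanged.
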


\begin{proof}
We follow the line of reasoning from \cite{fra15}. The scaling $T_1(\l):=T(M^{1/\rho+\s}\,\l)$ looks reasonable, so
$$ \|T_1(\l)\|_p\le \frac{1}{\dist^\rho(\l,\br_+)\,|\l|^\s}\,, $$
and, by \cite[Theorem 9.2, (b)]{Si05}, we have for the determinant $f_1=\det_{p}(I+T_1)$
\begin{equation}\label{eq2}
\log|f_1(\l)|\le \frac{\gga_p}{\dist^{p\rho}(\l,\br_+)\,|\l|^{p\s}}\,, \qquad \l\in\bc\backslash\br_+.
\end{equation}

To apply Theorem \ref{cut} we have to ensure the normalization condition. Note that due to the assumptions
on the parameters in \eqref{eq1}
the function $T_1$ tends to zero along the left semi-axis, so the inequality (see \cite[Theorem 9.2, (c)]{Si05})
$$ |f_1(\l)-1|\le \p(\|T_1(\l)\|_p), \quad \p(t):=t\exp\bigl(\gga_p(t+1)^p\bigr), \quad t\ge0 $$
provides a lower bound for $f_1$ as long as the right side is small enough. We have for $t\ge1$ and $\l=-t\in\br_-$
$$ |f_1(-t)-1|\le \frac{C_1}{t^{\rho+\s}}\,, $$
(in the sequel $C_k$ stand for generic positive constants depending on the parameters involved). If $t\ge (2C_1)^{1/\rho+\s}=C_2$, then
$|f_1(-t)|\ge 1/2$, and so
\begin{equation}\label{eq3}
\log|f_1(-t)|\ge -2(1-|f_1(-t)|)\ge -\frac{2C_1}{t^{\rho+\s}}\,.
\end{equation}

Next, put
$$ h(\l):=\frac{f_1(t\l)}{f_1(-t)}\,, \qquad h(-1)=1. $$
It follows now from \eqref{eq2} and \eqref{eq3} that for $t\ge C_2$
\begin{equation*}
\begin{split}
\log|h(\l)| &= \log|f_1(t\l)|-\log|f_1(-t)|\le
\frac{\gga_p}{t^{\rho+\s}}\,\frac{1}{\dist^{p\rho}(\l,\br_+)\,|\l|^{p\s}}+\frac{2C_1}{t^{\rho+\s}} \\
&\le \frac{C_3}{t^{\rho+\s}}\,\lp\frac{1}{\dist^{p\rho}(\l,\br_+)\,|\l|^{p\s}}+1\rp
\le  \frac{C_3}{t^{\rho+\s}}\,\frac{(1+|\l|)^{p(\rho+\s)}}{\dist^{p\rho}(\l,\br_+)\,|\l|^{p\s}}\,.
\end{split}
\end{equation*}
Theorem \ref{cut} applies with
$$ a=p\rho, \quad r=p\s, \quad b=p(\rho+\s), \quad K=\frac{C_3}{t^{\rho+\s}}\,, $$
so $s=a$, $\{s\}_{a,\ep}=-a$, and in view of \eqref{range}
$$ \{-2r-a\}_{a,\ep}=-\min(a, -2r-a)=2r+a=p\rho+2p\s, $$
(recall that by the assumption $a>-2r-a$). Hence
$$ s_1=\frac{2p\s-1-\ep}2\,, \qquad s_2=p\rho+p\s+1+\ep, $$
and \eqref{btc08} implies
\begin{equation*}
\sum_{z\in Z(h)} \dist^{p\rho+1+\ep}(z,\br_+)\,\frac{|z|^{\frac{2p\s-1-\ep}2}}{(1+|z|)^{p\rho+p\s+1+\ep}}
\le \frac{C_4}{t^{\rho+\s}}\,,
\end{equation*}
or
\begin{equation*}
t^{\frac{1+\ep}2}\,\sum_{\z\in Z(f_1)} \dist^{p\rho+1+\ep}(\z,\br_+)\,\frac{|\z|^{\frac{2p\s-1-\ep}2}}{(t+|\z|)^{p\rho+p\s+1+\ep}}
\le \frac{C_4}{t^{\rho+\s}}\,.
\end{equation*}

For $|\z|\le1$ we fix $t$, say, $t=C_2$, and since $t+|\z|\le C_2+1$, we come to
\begin{equation*}
\sum_{\z\in Z(f_1)\cap\bar\bd} \dist^{p\rho+1+\ep}(\z,\br_+)\,|\z|^{p\s-\frac{1+\ep}2}\le C_5,
\end{equation*}
which, after scaling, is \eqref{bgk}.
\end{proof}

Note that under assumption \eqref{range}
$$ p\s-\frac{1+\ep}2\le -\frac{p\rho+1+\ep}2<0, $$
so for $|\z|\le1$
$$ |\z|^{p\s-\frac{1+\ep}2}\ge |\z|^{-\frac{p\rho+1+\ep}2}\,, $$
that is, \eqref{bgk} is stronger than \eqref{fr} with regard to eigenvalues tending to zero.
As far as the rest of the values of $\rho$ and $\s$ and the eigenvalues tending to infinity go, Theorem \ref{cut}
gives the same results as in \cite[Theorem 3.1]{fra15}.

\begin{remark}
The similar results can be obtained under (formally) more general assumptions
\begin{equation*}
\|T(\l)\|_p\le M\frac{(1+|\l|)^\tau}{\dist^\rho(\l,\br_+)\,|\l|^\s}\,, \qquad  \rho>0, \ \ \s\in\br, \ \ \rho+\s>\tau\ge0.
\end{equation*}
\end{remark}
}\fi

\medskip
\nt {\bf Acknowledgments.}
Recently, Peter Yuditskii has celebrated his 60-th anniversary. All three authors are Peter's friends, LG is his former associate at ILTPE in Kharkov,
and SK is his former Ph.D. student.  We would like to take this opportunity to warmly wish Peter many more years of flourishing scientific
(and musical) activity as well as deep mathematical insights.

\smallskip
We thank D. Tulyakov for useful discussions. A part of this research was done during L. Golinskii's visit to University of Bordeaux in the framework
of ``IdEx Invited Scholar Programm''. He gratefully acknowledges the financial support of the IdEx as well as the hospitality of the institution.

\end{document}